\numberwithin{equation}{section}
\newtheorem{theorem}{Theorem}[section]
\newtheorem{proposition}[theorem]{Proposition}
\newtheorem{lemma}[theorem]{Lemma}
\newtheorem{corollary}[theorem]{Corollary}
\newtheorem{claim}[theorem]{Claim}
\newtheorem{remark}{Remark}
\theoremstyle{remark}
\newtheorem{definition}{Definition}[section]
\newcommand{\R}{\mathbb{R}}
\newcommand{\N}{\mathbb{N}}
\DeclareMathOperator{\supp}{supp}
\numberwithin{equation}{section}
\begin{document}
\title [ Exponential Stabilization of BBM-KP]{ Global   Stabilization for the BBM-KP equations on $\R^2$}
\author[Gallego]{F. A. Gallego}
\address{Departamento de Matem\'atica y Estadística, Universidad Nacional de Colombia (UNAL), Cra 27 No. 64-60, 170003, Manizales, Colombia}
\email{fagallegor@unal.edu.co}
\author[Gonzalez Martinez]{V. H. Gonzalez Martinez}
\address{Departamento de Matem\'atica, Universidade Federal de Pernambuco, S/N Cidade Universit\'aria, 50740-545, Recife (PE), Brazil}
\email{victor.martinez@ufpe.br}
\author[Mu\~noz Grajales]{J. C. Muñoz Grajales}
\address{Departamento de Matem\'aticas, Universidad del Valle, Calle 13, No 100-00, Ciudad Universitaria Mel\'endez, Cali, Colombia}
\email{juan.munoz@correounivalle.edu.co}
\subjclass[2020]{Primary: 35Q53, 93C20, 93C15 Secondary: 93D20, 93B07, 93b52}
\keywords{KdV-type system, stabilization, unique continuation.}

\begin{abstract}
In this paper, we present results on the energy decay of the BBM-KP equations (I and II) posed on $\R^2$ with localized damping. This model offers an alternative to the KP equations, analogous to how the regularized long-wave equation relates to the classical Korteweg–de Vries (KdV) equation. We show that the energy associated with the Cauchy problem decays exponentially when a localized dissipative mechanism is present in a subdomain. Finally, we validate the theoretical results on the exponential stabilization of solutions to the BBM-KP equations with damping through numerical experiments using a spectral-finite difference scheme.
\end{abstract}

\maketitle

\section{Introduction}

\subsection{The model}
The main purpose of this paper is to study the exponential decay
of energy for the initial value problem associated \textcolor{black}{with} the Benjamin-Bona-Mahony Kadomtsev-Petviashvili equations (BBM-KP) which is the regularized version of the usual KP equation:
\begin{equation}\label{1}
		u_t+ u_x + u^pu_x -u_{xxt}+ \gamma \partial_x^{-1} u_{yy} =0,
\end{equation}
where $\gamma=\pm 1$ and $p\geq 1$ is an integer.  This model arises in various contexts where nonlinear dispersive waves propagate principally along the $x$-axis with weak dispersive effects undergone in the direction parallel to the $y$-axis and normal to the main direction of propagation. Similarly to the KP equations, the case $\gamma = 1$ corresponds to the BBM-KP-II equation, while $\gamma = -1$ leads to the BBM-KP-I model. It is \textcolor{black}{known} that the flows of the KP-I and KP-II equations, considered in natural spaces, behave very
differently, namely, the KP-II equation has a semilinear character, while the KP-I equation behaves as a quasilinear equation.

The BBM-KP equation has been studied from various mathematical points of view. For example, Bona et al. \cite{bona2002} investigated several classes of evolution equations (RLW-KP-type), including natural generalizations of a regularized version of the KP equation for the propagation of long surface water waves. They studied global well-posedness in appropriate function spaces and developed a theory of non-linear stability for certain traveling waves in cases where the equation admits solitary-wave solutions. Later, Saut et al. \cite{saut2004}, using the Fourier transform restriction method, demonstrated the existence of unique global-in-time solutions in a suitable functional space continuously embedded in $C(\mathbb{R}, H^1_x(\mathbb{R}^2))$. Furthermore, they established the orbital stability of localized lump-like solutions for the equation with $\gamma=-1$, improving the results presented in \cite{bona2002}. \textcolor{black}{In the context of stabilization properties, there are several works related to BBM-type equations. In particular, we can cite the work on the global uniform stabilization of the BBM-KdV equation presented in \cite{rosier}, where the author shows that stabilizing the BBM-KdV equation reduces to proving a Unique Continuation Property, which is established by developing a Carleman inequality. It is important to note that the BBM-KP equation can be seen as a two-dimensional generalization of the BBM-KdV equation.}

It is interesting to point out that there are many works related to equation \eqref{1} not only dealing with well-posedness theory. For example, in \cite{mammeri} the author proves that if the solution of this problem has compact support for any time, then the solution vanishes identically. This is called a unique continuation property. Moreover, a variety of exact traveling wave solutions of the BBM-KP equation and its generalizations have been formally derived, including solitons, compactons, solitary patterns, and periodic solutions, among others (see, for instance, \cite{aguilar2024, bona2002, mammeri, saut2004}). Furthermore, \cite{daspan2007} shows that the solutions of the IVP (KP) and regularized KP equations (BBM-KP) are the same, within their precision, on the time scale \(0 \leq t \leq \varepsilon^{-3/2}\), where nonlinear and dispersive effects can create a one-order difference in the wave profiles.  Finally, and very recently, in \cite{aguilar2024}, the authors show that the solution of the BBM-KP Cauchy problem converges to the solution of the BBM Cauchy problem in a suitable function space if the initial data for both equations are as close as \( y \rightarrow \infty \).

\vglue 0.3cm

\subsection{Setting of the Problem}
Our main purpose is to address two mathematical issues connected to  global well-posedness and large-time behavior of solutions of the BBM-KP with the \textcolor{black}{presence} of a damping term
\begin{equation}\label{e1}
\begin{cases} 
		u_t+ u_x + u^pu_x -u_{xxt}+ \gamma \partial_x^{-1}
    u_{yy} + a(x,y)u=0,& (x,y)\in \R^2,\ t>0. \\
	u(x,y,0)=u_0(x,y), & (x,y)\in \R^2,
\end{cases} 
\end{equation}
where $\gamma=\pm$ and $p\geq 1$ is  an integer. The primary objective of this paper is to demonstrate the exponential decay of the energy associated with equation \eqref{e1} for cases \( p = 1 \) and \( p = 3 \) with a ``localized" damping term. This type of dissipation was initially proposed in \cite{menzala2002} to control the KdV equation on a bounded interval and was later extended to models involving unbounded wave propagation domains (see, for example, \cite{cavalcanti2012, linares2009} and the references therein). More precisely, we present
new contributions with respect to the decay of the energy related to mild solutions for the damped BBM-KP equations \eqref{e1}, where \textcolor{black}{the} the non-negative function $a(x,y)$ is responsible for the dissipative effect and we establish the exponential decay of solutions of the energy and the well-posedness in a Bourgain Spaces continuously embedded in $C(\R;H^1_x(\R^2))$.

\textcolor{black}{First}, note that equation \eqref{e1} is a generalization of the well-known BBM-KP  \eqref{1}. In the case of the initial value problem related to \eqref{1}, as point out in \cite{saut2004}, the value 
\begin{equation}\label{energy}
    E(t)=\frac{1}{2}\|u(\cdot,\cdot,t)\|_{H^1_x}^2
\end{equation} 
can be interpreted as the energy. It is obvious that for a smooth and decaying at infinity solution $u(x,,y t)$ to \eqref{1}, the energy is a constant of motion, that is
\begin{align*}
    \|u(\cdot,\cdot,t)\|_{H^1_x}^2=\|u_0(\cdot,\cdot)\|_{H^1_x}^2
\end{align*}
Hence, the energy does not decay as \( t \rightarrow \infty \). Therefore, the primary objective is to establish the decay of energy in the cases where \( a(x,y) \neq 0 \) without assuming the existence of a positive constant \(\alpha_0\) such that \( a(x,y) \geq \alpha_0 \). To our knowledge, there are no studies that address the exponential stabilization of equations \eqref{e1} in the presence of localized damping.

Now, considering the energy $E(t)$ associated to the the model \eqref{e1}, at least formally, the solutions of \eqref{e1} should satisfy
\begin{equation}\label{diss-energy}
\displaystyle \frac{d}{dt}E(t) =  - \int_{\mathbb{\R}^2} a(x,y) u^2  dxdy,
\end{equation}
for any positive $t$. Then, if we assume that $a(x,y)\geq \alpha_0$, for some $\alpha_0 >0$, it is forward to infer that $E(t)$ converges to zero exponentially. In contrast, when the damping function $a$ is allowed to change sign or is effective only on the \textcolor{black}{in a} subset of the domain, the problem is much more subtle. Moreover,  whether \eqref{diss-energy}  generates a flow that can be continued indefinitely in the temporal variable, defining a solution valid for all $t\geq 0$, is a non-trivial question. In this paper, we consider a localized damping which acts only \textcolor{black}{in a} bounded subset of the line, more precisely,
\begin{equation}\label{hyp a}
\begin{cases}
\text{$a \in H^s(\R^2)$, \textcolor{black}{$s>\frac{3}{2}$} is nonnegative and  $a(x,y)\geq \lambda_0 >0$} \\
\text{ almost everywhere in $\R^2\setminus \omega$ where $\omega:=(-B, B) \times (C,D) $, }\\
\text{for some $B>0$  and $C<D$ such that $D-C < \pi$.}
\end{cases}
\end{equation}
Thus, given the damping described by \eqref{hyp a}, a natural question arises:
\vspace{0.2cm}
\begin{center}
\textit{Does $ E(t) \rightarrow 0$ as $ t \rightarrow \infty$? If so, can we give the decay rate?}
\end{center}
Our work focused on a specific choice of damping effect and aims to establish that this model predicts interesting qualitative properties observed in certain initial value problems of nonlinear dispersive equations. The problem of the time decay rate for this type of model posed in an unbounded domain has received considerable attention in recent years; see, for instance, \cite{alam2013, capistrano2024, cavalcanti2012, cavalcanti2014, doronin2016, faminskii2021, gallego2019, komornik2020, linares2009, pazoto2010, saut1993_2, saut2001, song2010, wang2021, wang2023, wazwaz2005, wazwaz2008} and references therein.

\subsection{Preliminar and notations}

This Cauchy problem is dealt with in the articles of Bona, Liu and Tom \cite{bona2002},
or Saut and Tzvetkov \cite{saut2004}. They proved that for all initial datum in the
subspace of $L^2(\R^2)$ provided with the norm $(\|u\|_2^2 + \|u_x\|_2^2)^{\frac12}$, there exists
a unique global in time solution. Indeed, If we
denote by $X^s(\R^2)$ the space of functions given by
\begin{equation}
X^s(\R^2):=\left\lbrace f \in H^{s}(\R^2): \partial_x^{-1}f_y\in H^{s-1}(\R^2)\right\rbrace
\end{equation}
 provided with the norm
\begin{equation}
\|f\|_{X^s}^2:=\|f\|_{H^s}^2 + \| \partial_x^{-1} f_y\|_{H^{s-1}}^2.
\end{equation}
Here and below, $\partial_x^{-1}f_y$ is defined via Fourier transform as
\begin{equation}
\widehat{\partial_x^{-1}f_y}(\xi,\eta)=\frac{\eta}{\xi}\widehat{f}(\xi,\eta).
\end{equation}
Note that if $s\geq 1$; then $\partial_x^{-1}f_y \in L^2(\R^2)$,  so there is a $g \in L^2(\R^2)$ such that
$f_y= g_x$ at least in the sense of distribution. On the other hand, since $f \in H^s(\R^2)\subset H^1(\R^2)$; so $f_y \in  L^2(\R^2)$; whence $g_x \in L^2(\R^2)$. Thus $g$ lies in the Hilbert space $H^1_x(\R^2)$; where
\begin{equation}
H^{\lambda}_x(\R^2)=\{f\in L^2(\R^2): (1+\xi^2)^{\lambda/2}\widehat{f}(\xi,\eta) \in L^2(\R^2) \}
\end{equation}
and similarly, we can define 
\begin{equation}
H^{\lambda}_y(\R^2)=\{f\in L^2(\R^2): (1+\eta^2)^{\lambda/2}\widehat{f}(\xi,\eta) \in L^2(\R^2) \}
\end{equation}
supplied with the obvious norms. Moreover, for $\beta \geqslant 0$, we consider
$$
V_\beta\left(\mathbb{R}^2\right)=\left\{f \in L_2\left(\mathbb{R}^2\right):|\xi|^\beta \hat{f}(\xi, \eta), \frac{\eta}{\xi} \hat{f}(\xi, \eta) \in L_2\left(\mathbb{R}^2\right)\right\}
$$
with the norm
$$
\|f\|_{V_\beta\left(\mathbb{R}^2\right)}=\left(\int_{\mathbb{R}^2}\left(1+|\xi|^{2 \beta}+\frac{\eta^2}{\xi^2}\right)|\hat{f}(\xi, \eta)|^2 d \xi d \eta\right)^{1 / 2} .
$$
On the other hand, if $\beta>0$, let
\begin{equation}
W_{\beta}(\R^2)=\{f\in H^{\beta+1}_x(\R^2): \partial_x^{-1}f_y \in H^{\beta}_x(\R^2)\}
\end{equation}
with norm, 
\begin{equation}
\|f\|_{W_{\beta}(\R^2)}=\|f\|_{H^{\beta}_x(\R^2)}+\|\partial_x^{-1}f_y\|_{H^{\alpha/2}_x(\R^2)}
\end{equation}
and let $\widetilde{W}_{\beta}(\R^2)$ be the space
\begin{equation}
\widetilde{W}_{\beta}(\R^2)=\{f\in H^{\beta+1}_x(\R^2): \partial_x^{-1}f_y \in H^{\beta}_x(\R^2)\}
\end{equation}
with norm, 
\begin{equation}
\|f\|_{\widetilde{W}_{\beta}(\R^2)}=\|f\|_{H^{\beta}_x(\R^2)}+\|\partial_x^{-1}f_y\|_{H^{\beta}_x(\R^2)}.
\end{equation}
Note that if $\beta > 1/2$, then for any $f \in W_{\beta}(\R^2)$, 
\begin{equation}\label{e4}
\int_{-\infty}^{\infty}f_y(x,y)dx=0,
\end{equation}
where for almost every $y$; the left-hand side of \eqref{e4} is an improper Riemann integral. As above when delineating $\partial_x^{-1}f_y$, define $\partial_x^{-2}f_{yy}$ via the relation
\begin{equation}
\widehat{\partial_x^{-2}f_{yy}}(\xi,\eta)=\frac{\eta^2}{\xi^2}\widehat{f}(\xi,\eta).
\end{equation}
If this quantity lies in $L^2(\R^2)$; then $f_{yy}= h_{xx}$ for some $h \in L^2(\R^2)$; and thus as
in \eqref{e4}, if $\partial_x^{-2}f_{yy} \in H^{\beta}_x (\R^2)$, for some $\beta >  1/2$; then
\begin{equation}
\int_{-\infty}^{\infty}\int_{-\infty}^{x}f_{yy}(s,y)dsdx=0,
\end{equation}

\subsection{Main Result}
The next result asserts that \((1.1)\) is globally uniformly exponentially stable in
\(H^{1}_x(\mathbb{R}^2)\). This means that the decay rate \(\nu\) in Proposition \ref{th:MUes} below is independent of \(R\) when \(\color{red} \left\|u_{0}\right\|_{H^{1}_x} \leqslant R\). More precisely, we have the following definition
 \begin{definition}
System \eqref{e1} is said to be {\em locally uniformly exponentially stable}
in $H^1_x(\R^2)$ if for any $r > 0$ there exist two constants $C > 0$ and $\eta  > 0$ such that for any
$u_0 \in  H^1_x(\R^2)$ with $\|u_0\|_{H^1_x} < r$ and for any   solution $u = u(x,y, t)$ in $X^{s}$ of \eqref{e1}, it holds that
\begin{equation}\label{localexponentialdefinition}
\|u(t)\|_{H^1_x}^2 \leq C\|u_0\|_{H^1_x}^2e^{-\eta t}, \quad \forall t \geq 0 
\end{equation}
If the constant $\eta $ in \eqref{localexponentialdefinition} is independent of $r$, the system \eqref{e1} is said to be {\em globally
uniformly exponentially} stable in $H^1_x(\R^2)$.
\end{definition}

\begin{theorem}[Global Uniform Stabilization]\label{globalexp}
Let $s>3/2$, $a\in L^\infty(\Omega)$ satisfy Assumption \ref{hyp a} and $\gamma=\pm 1$ and $p=1$. Then, \eqref{e1} is globally uniformly exponentially stable in \(H^{1}_x(\mathbb{R}^2)\),
i.e. there exists a positive constant \(\nu\) that does not depend on initial data such that, for any $\varepsilon>0$ and \( u_0 \in X^s \cap W_1(\R^2)\),  the corresponding solution $u$ of \eqref{e1} belongs to the space $C(\R,X^s)$ such that
\begin{equation}\label{exponentialdecay}
    E(t)\leqslant \alpha\left( E(0)\right) \mathrm{e}^{-\nu t}, \quad t \geqslant \varepsilon,
\end{equation}
where $E$ is the energy functional given by \eqref{energy}  and \(\alpha_{\varepsilon}: \mathbb{R} \rightarrow\)
\(\mathbb{R}\) is a  non-negative continuous function.
\end{theorem}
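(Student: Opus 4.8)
The plan is to run the classical scheme \emph{global well-posedness} $\Rightarrow$ \emph{observability inequality} $\Rightarrow$ \emph{exponential decay}, adapted to the unbounded domain $\R^2$. First I would establish global existence of solutions $u\in C(\R;X^s)$ for $u_0\in X^s\cap W_1(\R^2)$ together with the dissipation law \eqref{diss-energy}. Since $1-\partial_x^2$ is boundedly invertible and $a\in H^s(\R^2)\hookrightarrow L^\infty(\R^2)$ (as $s>1$), the damping $au$ is a zero-order, globally Lipschitz perturbation of the undamped BBM-KP flow of Bona--Liu--Tom \cite{bona2002} and Saut--Tzvetkov \cite{saut2004}; the Fourier-restriction/Bourgain local theory and the $H^1_x$ a priori bound carry over with the only extra estimate $\|au\|_{X^s}\lesssim\|a\|_{H^s}\|u\|_{X^s}$, and multiplying \eqref{e1} by $u$ and integrating (the transport, Burgers and $\partial_x^{-1}\partial_{yy}$ terms drop after integration by parts; $u_0\in W_1$ ensures the mean-zero relation \eqref{e4} that legitimizes the manipulations with $\partial_x^{-1}$) yields \eqref{diss-energy}, so $E$ is nonincreasing.

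The core is the observability estimate: for each $R>0$ there exist $T=T(R)>0$ and $C=C(R)>0$ such that every solution with $E(0)\le R$ satisfies $E(0)\le C\int_0^T\!\int_{\R^2}a\,u^2\,dx\,dy\,dt$. I would prove it by contradiction and compactness. A failing sequence $(u_n)$ has $E_n(0)\le R$ and $\big(\int_0^T\!\int a u_n^2\big)/E_n(0)\to0$; along a subsequence either $E_n(0)\to\beta>0$, or $E_n(0)\to0$, in which case one rescales $v_n=u_n/\|u_{0,n}\|_{H^1_x}$, which for $p=1$ solves \eqref{e1} with the quadratic term carrying the vanishing coefficient $\|u_{0,n}\|_{H^1_x}$ and hence tends to the linearized damped equation. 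In either situation the equation is first order in $t$ with the \emph{bounded} operators $(1-\partial_x^2)^{-1}\partial_x$, $(1-\partial_x^2)^{-1}\partial_x^{-1}\partial_{yy}$ and $(1-\partial_x^2)^{-1}a$, so $\partial_t u_n$ is bounded in a negative-order space on compact sets, and an Aubin--Lions argument gives strong convergence on compact space-time sets to a limit $w$ solving the (possibly linear) BBM-KP equation with $\int_0^T\!\int a w^2=0$. Since $a\ge\lambda_0$ on $\R^2\setminus\omega$, this forces $w(\cdot,t)$ to vanish off $\omega$, i.e.\ $\supp w(\cdot,t)\subseteq\overline{\omega}$, so $aw\equiv0$ and $w$ solves the \emph{undamped} equation with compact spatial support; the unique continuation property (Mammeri \cite{mammeri}, together with its linear analogue) then forces $w\equiv0$. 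The hypothesis $D-C<\pi$ enters here: $u$ being already controlled for $y\le C$ and $y\ge D$ by the damping, the Poincar\'e inequality on the interval $(C,D)$ — whose constant $(D-C)/\pi$ is $<1$ precisely then — is what lets one absorb the contribution over the strip and close the estimate. It remains to upgrade the convergence to the full energy norm so as to reach the contradiction $E_n(0)\to0$; for this I would recover the top-order piece $\partial_x u_n$ from $\partial_x$-differentiated, spatially localized multiplier identities together with the dissipation, as is standard for BBM/RLW-type models.

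Granting the observability estimate, the dissipation law gives $E(T)=E(0)-\int_0^T\!\int a u^2\le(1-1/C(R))E(0)$; since $E$ is nonincreasing the same bound holds on each $[kT,(k+1)T]$ with the same constant, whence $E(t)\le E(0)\mathrm{e}^{-\nu(R)t}$, i.e.\ local uniform exponential stability. For the \emph{global} statement I would note that the observability argument applied to the linearized damped flow furnishes a rate $\nu$ and a threshold $\delta_0$ \emph{independent of the data} with the property that $E(t_0)\le\delta_0$ forces $E(t)\le CE(t_0)\mathrm{e}^{-\nu(t-t_0)}$; for arbitrary $u_0$ the nonincreasing energy enters $\{E\le\delta_0\}$ after a time $t_0=t_0(E(0))$ controlled by the previous step, and splicing the two phases yields $E(t)\le\alpha(E(0))\mathrm{e}^{-\nu t}$ for $t\ge\varepsilon$, with $\nu$ independent of $u_0$ and $\alpha$ continuous. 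The main obstacle throughout is the observability step on the unbounded domain: it requires local strong compactness — available precisely because the damping acts off a \emph{bounded} set, so no energy can escape to spatial infinity — the recovery of $\partial_x u$ in the energy norm, and a careful treatment of the nonlocal term $\gamma\partial_x^{-1}\partial_{yy}u$ in the multiplier computations, the width restriction $D-C<\pi$ being exactly what makes the attendant Poincar\'e absorption and unique continuation go through.
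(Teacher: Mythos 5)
Your overall route is the same as the paper's: global well-posedness for the damped equation, the dissipation identity \eqref{e152}, an observability inequality proved by contradiction plus Aubin--Lions/Simon compactness on compact sets, unique continuation for compactly supported solutions of the undamped equation, iteration over time intervals, and finally the passage from the $R$-dependent rate to a data-independent rate by splicing with the small-ball/neighborhood-of-the-origin decay (this is exactly Proposition \ref{th:MUes} followed by the short proof of Theorem \ref{globalexp}). However, your version of the key observability step has a genuine hole. You normalize the failing sequence by the global energy $\|u_{0,n}\|_{H^1_x}$ and then admit that you still need to ``upgrade the convergence to the full energy norm'' to contradict the normalization, invoking unspecified $\partial_x$-differentiated localized multiplier identities. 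This is precisely the point that can fail on $\R^2$: the damping only controls $\int_0^T\!\int a u_n^2$, hence $\int_{\R^2\setminus\omega}u_n^2$, but it gives no control of $\int_{\R^2\setminus\omega}(\partial_x u_n)^2$, so your normalized sequence can carry all of its $H^1_x$ mass outside every compact set and converge locally to $0$, yielding no contradiction. The paper avoids this issue by normalizing with the \emph{localized} quantity $\lambda_n=\|u_n\|_{L^2(0,T;H^1_x(\omega))}$, i.e.\ the very denominator of the contradiction quotient; since $\omega$ is bounded, the local strong compactness makes this norm pass to the limit, giving \eqref{enorm1} and \eqref{limitnorm1}, which contradicts $v\equiv 0$ immediately once the UCP is applied. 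Without either adopting that normalization or actually supplying the recovery of $\partial_x u_n$ in the energy norm, your observability inequality $E(0)\le C\int_0^T\!\int a u^2$ is not established.

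A second, more minor, misattribution: the hypothesis $D-C<\pi$ does not enter through a Poincar\'e-type absorption over the strip in the observability estimate. In the paper it is used only inside the unique continuation result (Theorem \ref{UCP}): one tests the compactly supported solution against $G(x,y)=e^{x}\cos(y+y_0)$, chosen so that $G-G_{xx}=0$ and $g+g''=0$, and the point of $D-C<\pi$ is that one can pick $y_0$ with $\cos(y+y_0)>0$ on the $y$-support $[C,D]$, forcing $\int e^{x}\cos(y+y_0)\,u^{2}\,dx\,dy=0\Rightarrow u\equiv0$ (for $p=1$, where $u^{p+1}$ has a sign). The compact support in $x$ as well as in $y$, which you do obtain from $a\ge\lambda_0$ off the bounded set $\omega$, is essential for this argument; the ``Poincar\'e absorption'' step you describe is not part of a correct proof as stated and is not how the width restriction is exploited.
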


One of the main tools employed in this paper is the Compactness-Uniqueness Argument. In the context of stabilization, it was initially used in \cite{littman}, crediting the idea to L. Hörmander. However, some attribute the method to Lions in \cite{lions}. However, this approach simplifies our task of proving an \textit{ observability inequality} for the non-linear system \eqref{e1}, which is reduced to having some unique continuation property for the model.

\subsection{Paper’s outline}
The paper is organized as follows:
\vglue 0.4cm

In Section \ref{Wellposednessand UCP}, we analyze the global well-posedness of the system \eqref{e1} when the initial data belongs to the space $X^s \cap W_1(\R^2)$, where $s > \frac{3}{2}$, and we establish the unique continuation property for the system \eqref{e1} in the absence of a damping term. Section \ref{stabilizationsection} is devoted to the exponential stabilization of the system \eqref{e1} in $H^1_x(\mathbb{R}^2)$, that is, to the proof of Theorem \ref{globalexp}. In Section \ref{simulations}, we check the performance of our feedback laws through some numerical simulations. Finally, in Section \ref{furthercomments}, we conclude the work with some further considerations.

\section{Global Well-posedness and Unique Continuation Property}\label{Wellposednessand UCP}

The primary objective of this section is to establish the global well-posedness of the BBM-KP equation \eqref{e1}, incorporating a localized damping term \(a(\cdot, \cdot)u\), where \(a\) meets the condition \eqref{hyp a}. To achieve this, we adapt the approach used in \cite{bona2002}. Specifically, we consider
 the solution of the initial value problem \eqref{e1}
formally as
\begin{equation}\label{duhamel}
u(x,y,t)=K_tu_0 - \int_0^t K_{t-\tau}Q\left( \frac{u^{2}(\cdot, \tau)}{2}\right)d\tau  - \int_0^t K_{t-\tau}Q\left(a(\cdot)u(\cdot,\tau)\right)d\tau,
\end{equation}
where the operators $K_t$ and $Q$ are defined via their Fourier transforms,
\begin{equation}
\widehat{K_t f}(\xi,\eta)=e^{\frac{-i(\xi^2 + \gamma \eta^2)t}{\xi(1+|\xi|^2)}}\widehat{f}(\xi,\eta)
\end{equation}
and
\begin{equation}
\widehat{Qf}(\xi,\eta)=\frac{i\xi}{1+|\xi|^2}\widehat{f}(\xi,\eta)
\end{equation}
\textcolor{black}{Before presenting} the well-posedness result, we establish some auxiliary results.  The following results show some properties of the operators $K_t$ and $Q$. 

\begin{proposition}
The operator $K_t$ is a unitary operator on all the spaces $L^2\left(\mathbb{R}^2\right)$, $H^s\left(\mathbb{R}^2\right)$, $X_s$, $H_x^\lambda\left(\mathbb{R}^2\right)$, $V_2\left(\mathbb{R}^2\right)$, $W_2\left(\mathbb{R}^2\right)$ and $\tilde{W}_2\left(\mathbb{R}^2\right)$. Moreover, $Q$ is a bounded linear operator from $L^2\left(\mathbb{R}^2\right)$ into $H_x^{1}\left(\mathbb{R}^2\right)$, from $H_y^1\left(\mathbb{R}^2\right)$ into $V_{1}\left(\mathbb{R}^2\right)$, from $H^1\left(\mathbb{R}^2\right)$ into $W_{1}\left(\mathbb{R}^2\right)$, from $H_y^1\left(\mathbb{R}^2\right)$ into $\tilde{W}_{1}\left(\mathbb{R}^2\right)$, and from $H^s\left(\mathbb{R}^2\right)$ into $X_s$, for any $s \geqslant 1$.
\end{proposition}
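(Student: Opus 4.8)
The plan is to notice that every norm appearing in the statement has the form $\norm{f}^2=\int_{\R^2}w(\xi,\eta)^2\,\abs{\widehat f(\xi,\eta)}^2\,d\xi\,d\eta$ for a fixed nonnegative weight $w$, or a finite sum of such terms (as for $X^s$, $V_\beta$, $W_\beta$, $\widetilde W_\beta$). Since $K_t$ and $Q$ are Fourier multiplier operators, the whole proposition reduces to pointwise estimates on their symbols against these weights. No analytic machinery beyond Plancherel is needed.

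For $K_t$ I would argue as follows. Write its symbol as $m_t(\xi,\eta)=\exp\!\big(\tfrac{-i(\xi^2+\gamma\eta^2)t}{\xi(1+\abs{\xi}^2)}\big)$, setting it equal to $1$ on the null set $\{\xi=0\}$. Because $\abs{m_t(\xi,\eta)}=1$ almost everywhere, multiplication by $m_t$ on the Fourier side commutes with multiplication by every weight $w$ and leaves $\abs{\widehat f}$ unchanged; hence $\norm{K_tf}=\norm{f}$ for each of the listed norms, and in particular $K_t$ maps each space into itself (membership in $V_\beta$, $W_\beta$, $\widetilde W_\beta$ involves the factor $\eta/\xi$, but this is unaffected since $\abs{\widehat f}$ is preserved). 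Since the symbol of $K_{-t}$ is $\overline{m_t}=m_t^{-1}$, one gets $K_tK_{-t}=K_{-t}K_t=\id$ and $K_t^\ast=K_{-t}$, so $K_t$ is unitary on $L^2(\R^2)$, $H^s(\R^2)$, $X^s$, $H^\lambda_x(\R^2)$, $V_2(\R^2)$, $W_2(\R^2)$ and $\widetilde W_2(\R^2)$.

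For $Q$ the symbol is $q(\xi,\eta)=\tfrac{i\xi}{1+\abs{\xi}^2}$, and the symbol of $\partial_x^{-1}(Qf)_y$ is $\tfrac{\eta}{\xi}q(\xi,\eta)=\tfrac{i\eta}{1+\abs{\xi}^2}$, which is bounded near $\xi=0$; this cancellation of the $\xi$ in the denominator of $\eta/\xi$ against the $\xi$ in the numerator of $q$ is exactly what makes $Q$ send a function into the $\partial_x^{-1}(\cdot)_y$-controlled spaces. Each mapping claim then follows from an elementary inequality: $\tfrac{\xi^2}{(1+\xi^2)^2}\le 1$ gives $\norm{Qf}_{H^s}\le\norm{f}_{H^s}$ (whence, with $s=1$, $Q\colon L^2(\R^2)\to H^1_x(\R^2)$ and $Q\colon H^s(\R^2)\to X^s$ for the first component); the bound $\tfrac{\eta^2}{(1+\xi^2)^2}\le\eta^2$ controls $\norm{\partial_x^{-1}(Qf)_y}$ in terms of $y$-derivatives of $f$, which handles the second component for the targets $V_1$, $W_1$, $\widetilde W_1$ and $X^s$; combining the two yields $Q\colon H^1(\R^2)\to W_1(\R^2)$, $Q\colon H^1_y(\R^2)\to V_1(\R^2)$ and $Q\colon H^1_y(\R^2)\to\widetilde W_1(\R^2)$. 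I would present these as a short table of symbol inequalities, verifying one representative case (say $H^1_y\to V_1$) in full and remarking that the others are identical in spirit.

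The only real obstacle is bookkeeping: for $X^s$, $V_\beta$, $W_\beta$, $\widetilde W_\beta$ one must match several weights against the symbols of both $Q$ and $\partial_x^{-1}(Q\cdot)_y$ simultaneously, and keep track of the $\xi$-singularities created by the $\eta/\xi$ factors so that each is absorbed by the numerator of $q$. Once the correct pointwise comparisons are written down, the estimates are immediate.
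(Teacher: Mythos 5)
Your argument is correct, but it is worth noting that the paper does not actually prove this proposition at all: its ``proof'' is the one-line citation of Proposition 3.1 of Bona--Liu--Tom \cite{bona2002} (with $\alpha=2$ in their notation). What you propose is precisely the self-contained Fourier-multiplier argument that underlies that cited result: since every norm in play is a weighted $L^2$ norm of $\widehat f$ (or a finite sum of such), the unimodularity of the symbol of $K_t$ (defined arbitrarily on the null set $\{\xi=0\}$) together with the group relation $K_tK_{-t}=\id$ gives unitarity on all the listed spaces, and each mapping property of $Q$ reduces to a pointwise bound on $q(\xi)=i\xi/(1+\xi^2)$ and on the symbol $i\eta/(1+\xi^2)$ of $\partial_x^{-1}(Qf)_y$, the cancellation of the $\xi$ in $\eta/\xi$ against the numerator of $q$ being the only point of substance. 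Your route buys a proof that is independent of the reference and makes transparent why the damping term causes no new difficulty, at the cost of the bookkeeping you acknowledge. One small imprecision: the deduction ``$\frac{\xi^2}{(1+\xi^2)^2}\le 1$ gives $\norm{Qf}_{H^s}\le\norm{f}_{H^s}$, whence with $s=1$, $Q\colon L^2\to H^1_x$'' is not literally valid, since boundedness $H^1\to H^1$ does not give the one-derivative gain $L^2\to H^1_x$; the bound you actually need is $(1+\xi^2)\,\abs{q(\xi)}^2=\frac{\xi^2}{1+\xi^2}\le 1$, which is trivially true, so this is a slip of exposition rather than a gap.
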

\begin{proof}
    The proposition follows from \cite[Proposition 3.1]{bona2002}  with $\alpha=2$.
\end{proof}

\begin{corollary}\label{coro1}
\begin{enumerate}
\item  The composite operator $K_t Q$ is a bounded linear operator from $L^2\left(\mathbb{R}^2\right)$ to $H_x^{1}\left(\mathbb{R}^2\right)$ for all $\alpha \geqslant 1$.
\item $K_t Q$ is bounded from $H_y^1\left(\mathbb{R}^2\right)$ to $V_{1}\left(\mathbb{R}^2\right)$.
\item $K_t Q$ maps continuously $H^1\left(\mathbb{R}^2\right)$ and $H_y^1\left(\mathbb{R}^2\right)$ to $W_{1}\left(\mathbb{R}^2\right)$ and $\tilde{W}_{1}\left(\mathbb{R}^2\right)$, respectively. 
\item $K_t Q$ is bounded from $H^s\left(\mathbb{R}^2\right)$ to $X_s$ for all $s \geqslant 1$.
\end{enumerate}
\end{corollary}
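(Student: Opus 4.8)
The final statement is Corollary~\ref{coro1}, which asserts mapping properties of the composite operator $K_tQ$. The plan is to deduce it directly from the Proposition immediately preceding it, since that Proposition already contains all the substantive content.

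\medskip

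\textbf{Overall approach.} Each of the four assertions in Corollary~\ref{coro1} is a composition of two maps whose boundedness is guaranteed by the Proposition: $Q$ sends a given Sobolev-type space into an intermediate space, and $K_t$ acts as a unitary (hence bounded, with norm $1$) operator on that intermediate space. So the strategy is simply: for each item, factor $K_tQ$ through the correct intermediate space, quote the $Q$-mapping statement, then quote the $K_t$-invariance statement, and conclude by composition that $K_tQ$ is bounded with a bound independent of $t$.

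\medskip

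\textbf{Key steps, in order.} First, for item (1): the Proposition gives $Q\colon L^2(\R^2)\to H^1_x(\R^2)$ bounded; I would then need $K_t$ to be bounded on $H^1_x(\R^2)$, which is exactly one of the spaces listed in the Proposition ($H_x^\lambda(\R^2)$ with $\lambda=1$). Composing yields (1), with the operator norm of $K_tQ$ bounded by $\|Q\|$ uniformly in $t$. (I would also note that the stray ``for all $\alpha\geq 1$'' is vacuous here, a leftover from the $\alpha=2$ specialization of \cite{bona2002}.) Second, for item (2): $Q\colon H^1_y(\R^2)\to V_1(\R^2)$ is bounded by the Proposition, and $K_t$ is unitary on $V_2(\R^2)$; here one must check that $K_t$ is also bounded on $V_1(\R^2)$ — this follows because the Fourier multiplier defining $K_t$ has modulus $1$, so $K_t$ is in fact an isometry of \emph{every} weighted $L^2$ space of the form appearing in the definitions of $V_\beta$, $W_\beta$, $\widetilde W_\beta$, regardless of the exponent $\beta$. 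Third, for item (3): apply $Q\colon H^1(\R^2)\to W_1(\R^2)$ and $Q\colon H^1_y(\R^2)\to\widetilde W_1(\R^2)$ from the Proposition, together with the fact that $K_t$ preserves $W_\beta(\R^2)$ and $\widetilde W_\beta(\R^2)$ for every $\beta>0$ (again by the unit-modulus multiplier observation), to get the two claimed continuous mappings into $W_1(\R^2)$ and $\widetilde W_1(\R^2)$ respectively. Fourth, for item (4): $Q\colon H^s(\R^2)\to X_s$ is bounded by the Proposition for $s\geq 1$, and $K_t$ is unitary on $X_s$ by the Proposition, so the composition is bounded for all $s\geq 1$.

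\medskip

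\textbf{Main obstacle.} There is essentially no deep obstacle: the work has already been done in the Proposition (and ultimately in \cite[Proposition 3.1]{bona2002}). The only point requiring a word of justification is that $K_t$ is bounded on the \emph{intermediate} spaces $V_1$, $W_1$, $\widetilde W_1$ with a bound independent of $t$, since the Proposition as stated only explicitly lists $V_2$, $W_2$, $\widetilde W_2$. This is immediate from the structure of $K_t$ as a unimodular Fourier multiplier: such a multiplier preserves the $L^2$ norm weighted by any fixed function of $(\xi,\eta)$, and all the norms $\|\cdot\|_{V_\beta}$, $\|\cdot\|_{W_\beta}$, $\|\cdot\|_{\widetilde W_\beta}$ are of this form. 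Hence I would state this one-line remark explicitly and then assemble the four items as compositions, exactly as indicated; the uniformity in $t$ of all constants is automatic because $\|K_t\|=1$ throughout.
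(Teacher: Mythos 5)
Your proposal is correct and matches the intended argument: the paper gives no separate proof of Corollary~\ref{coro1}, since it is meant to follow immediately from the preceding Proposition by composing the mapping properties of $Q$ with the fact that $K_t$, being a unimodular Fourier multiplier, acts isometrically (uniformly in $t$) on all the weighted-$L^2$-type spaces involved. Your explicit remark patching the $V_1$, $W_1$, $\widetilde W_1$ versus $V_2$, $W_2$, $\widetilde W_2$ index mismatch is a sensible clarification but does not change the route.
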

The upcoming embedding result will be useful in demonstrating the local well-posedness of the initial value problem.
 
 \begin{lemma}
 For $\beta>\frac{1}{2}, W_\beta\left(\mathbb{R}^2\right)$ being continuously embedded in $C_{\mathrm{b}}\left(\mathbb{R}^2\right)$, the bounded continuous functions defined in $\mathbb{R}^2$.
 \end{lemma}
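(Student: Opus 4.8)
The plan is to establish the embedding $W_\beta(\R^2) \hookrightarrow C_{\mathrm b}(\R^2)$ for $\beta > \tfrac12$ by a direct Fourier-analytic estimate, showing that $\widehat f \in L^1(\R^2)$ with $\|\widehat f\|_{L^1} \lesssim \|f\|_{W_\beta}$; then $f$ agrees a.e.\ with a bounded continuous function (its inverse Fourier transform), and the $C_{\mathrm b}$-bound follows from $\|f\|_{L^\infty} \le (2\pi)^{-1}\|\widehat f\|_{L^1}$.

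First I would recall the definition: $f \in W_\beta(\R^2)$ means $f \in H^{\beta+1}_x(\R^2)$ and $\partial_x^{-1} f_y \in H^\beta_x(\R^2)$, so that
\begin{equation*}
\int_{\R^2}\Bigl(1+|\xi|^2\Bigr)^{\beta+1}|\widehat f(\xi,\eta)|^2\,d\xi\,d\eta < \infty,
\qquad
\int_{\R^2}\Bigl(1+|\xi|^2\Bigr)^{\beta}\frac{\eta^2}{\xi^2}|\widehat f(\xi,\eta)|^2\,d\xi\,d\eta < \infty.
\end{equation*}
The strategy for the $L^1$ bound on $\widehat f$ is to split the frequency domain into the region $A = \{|\eta| \le |\xi|\}$ and its complement $A^c = \{|\eta| > |\xi|\}$, and on each apply Cauchy--Schwarz against a suitable weight. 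On $A$, I would write $|\widehat f| = \bigl[(1+|\xi|^2)^{(\beta+1)/2}|\widehat f|\bigr]\cdot (1+|\xi|^2)^{-(\beta+1)/2}$ and use Cauchy--Schwarz; the integral $\int_A (1+|\xi|^2)^{-(\beta+1)}\,d\xi\,d\eta$ converges because on $A$ we have $|\eta|\le|\xi|$, so integrating in $\eta$ first costs a factor $\lesssim |\xi| \le (1+|\xi|^2)^{1/2}$, leaving $\int_{\R}(1+|\xi|^2)^{-(\beta+1)+1/2}\,d\xi = \int_\R (1+|\xi|^2)^{-(\beta+1/2)}\,d\xi < \infty$ since $\beta > \tfrac12$. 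On $A^c$, I would instead exploit the second weight: there $\eta^2/\xi^2 > 1$, so $|\widehat f| \le \bigl[(1+|\xi|^2)^{\beta/2}\tfrac{|\eta|}{|\xi|}|\widehat f|\bigr]\cdot(1+|\xi|^2)^{-\beta/2}\tfrac{|\xi|}{|\eta|}$, and Cauchy--Schwarz reduces matters to the convergence of $\int_{A^c}(1+|\xi|^2)^{-\beta}\tfrac{\xi^2}{\eta^2}\,d\xi\,d\eta$; integrating in $\eta$ over $|\eta|>|\xi|$ gives $\int_{|\eta|>|\xi|}\tfrac{d\eta}{\eta^2} = \tfrac{2}{|\xi|}$, so this becomes $2\int_\R (1+|\xi|^2)^{-\beta}|\xi|^{-1}\cdot|\xi|\,d\xi = 2\int_\R(1+|\xi|^2)^{-\beta}\,d\xi$, which is finite since $\beta > \tfrac12$. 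Adding the two contributions yields $\|\widehat f\|_{L^1(\R^2)} \le C_\beta \|f\|_{W_\beta(\R^2)}$.

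The concluding step is standard: since $\widehat f \in L^1(\R^2)$, the Riemann--Lebesgue lemma gives that $\mathcal F^{-1}\widehat f$ is a bounded uniformly continuous function on $\R^2$, equal to $f$ almost everywhere, with $\|f\|_{C_{\mathrm b}} = \|f\|_{L^\infty} \le (2\pi)^{-1}\|\widehat f\|_{L^1} \le C_\beta'\|f\|_{W_\beta}$, proving the continuous embedding. I expect the main technical point — though not a deep obstacle — to be the careful handling of the region $A^c$ near the singular set $\{\xi = 0\}$: one must verify that the factor $\xi^2/\eta^2$ coming from the $\partial_x^{-1}f_y$-weight genuinely compensates the $1/\xi$ blow-up from the $\eta$-integration, which it does precisely because on $A^c$ the cutoff $|\eta|>|\xi|$ keeps $\eta$ away from $0$ whenever $\xi$ is, and the $\eta$-integral $\int_{|\eta|>|\xi|}\eta^{-2}\,d\eta$ contributes exactly the $|\xi|^{-1}$ needed. (This is the two-dimensional analogue of the classical fact that $H^s(\R^n)\hookrightarrow C_{\mathrm b}$ for $s>n/2$, with the anisotropic $\beta+1$ in $\xi$ and the $\partial_x^{-1}\partial_y$-control together playing the role of the isotropic $s > 1$ needed in dimension $2$.) Alternatively, if one prefers to avoid the case split, the same bound follows from the cited result that $W_\beta$ embeds into the anisotropic Sobolev space controlling $\|f\|_{H^{\beta+1/2}}$ combined with $\beta + \tfrac12 > 1 = \tfrac{2}{2}$, but the direct argument above is self-contained.
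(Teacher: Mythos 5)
Your overall strategy (prove $\widehat f\in L^1(\mathbb{R}^2)$ with $\|\widehat f\|_{L^1}\lesssim\|f\|_{W_\beta}$, then conclude via the inverse Fourier transform) is the right one — the paper itself simply cites Lemma 3.3 of Bona--Liu--Tom, whose proof is of exactly this type. However, your estimate on the region $A^c=\{|\eta|>|\xi|\}$ contains a computational error that breaks the proof precisely in the range needed. Integrating the weight there gives
\begin{equation*}
\int_{A^c}(1+|\xi|^2)^{-\beta}\frac{\xi^2}{\eta^2}\,d\eta\,d\xi
=\int_{\mathbb{R}}(1+|\xi|^2)^{-\beta}\,\xi^2\cdot\frac{2}{|\xi|}\,d\xi
=2\int_{\mathbb{R}}(1+|\xi|^2)^{-\beta}|\xi|\,d\xi,
\end{equation*}
not $2\int_{\mathbb{R}}(1+|\xi|^2)^{-\beta}\,d\xi$ as you wrote (you dropped a factor $|\xi|$ when combining $\xi^2$ with the $\eta$-integral $2/|\xi|$). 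The corrected integral behaves like $|\xi|^{1-2\beta}$ at infinity and therefore diverges for all $\beta\le 1$; in particular it diverges for $\beta=1$, which is exactly the case $W_1(\mathbb{R}^2)$ used in the paper's main theorem. So the sharp cutoff at $|\eta|=|\xi|$ is too crude: it only yields the embedding for $\beta>1$.

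The gap is fixable within your own framework. Either split instead at $|\eta|=|\xi|(1+\xi^2)^{1/2}$ — then the first region costs $2\int|\xi|(1+\xi^2)^{-\beta-1/2}\,d\xi$ and the second costs the same quantity, both finite exactly for $\beta>\tfrac12$ — or, more cleanly, apply Cauchy--Schwarz once with the combined weight $w(\xi,\eta)^2=(1+\xi^2)^{\beta+1}+(1+\xi^2)^{\beta}\eta^2/\xi^2$ and use $\int_{\mathbb{R}}\frac{d\eta}{c+d\,\eta^2}=\frac{\pi}{\sqrt{cd}}=\pi|\xi|(1+\xi^2)^{-\beta-1/2}$ with $c=(1+\xi^2)^{\beta+1}$, $d=(1+\xi^2)^{\beta}/\xi^2$; the remaining $\xi$-integral converges precisely when $\beta>\tfrac12$, giving the stated range. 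Finally, your fallback remark that $W_\beta$ embeds into the isotropic space $H^{\beta+1/2}(\mathbb{R}^2)$ is not correct: for $|\xi|\sim 1$ and $|\eta|\to\infty$ the $W_\beta$ weight grows only like $\eta^2$, whereas the $H^{\beta+1/2}$ weight grows like $|\eta|^{2\beta+1}\gg\eta^2$ when $\beta>\tfrac12$, so that alternative route should be deleted rather than relied upon.
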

\begin{proof}
     The proposition follows from \cite[Lemma 3.3]{bona2002}.
\end{proof}

\subsection{Global Well-posedness}
 First, we will establish the local well-posedness of the solutions for the BBM-KP equation with a damping term. This involves developing a local existence theory under very weak assumptions.
\begin{theorem}\label{Localwellposedness}
Let $a \in H^s(\R^2)$  and let $u_0 \in X_s$ with $s>\frac{3}{2}$. Then there exists $T>0$ such that the initial value problem \eqref{e1} has a unique solution
$$
u \in C(0, T ; X_s) \cap C^1(0, T ; H^{s-2}(\mathbb{R}^2)).
$$
In particular, the solution $u$ also satisfies
$$
u \in C(0 ; T ; H^s(\mathbb{R}^2)) \quad \text { and } \quad \partial_x^{-1} u_y \in C(0, T ; H^{s-1}(\mathbb{R}^2)).
$$
The solution depends continuously in these function classes on variations of $u_0$ in $X_s$.\end{theorem}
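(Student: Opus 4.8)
The plan is to solve the integral (Duhamel) equation \eqref{duhamel} by a contraction mapping argument in the Banach space $X_{s,T}:=C([0,T];X_s)$ equipped with $\|u\|_{X_{s,T}}:=\sup_{0\le t\le T}\|u(t)\|_{X_s}$, and then to upgrade the time regularity by inspecting the equation directly. Define
\begin{equation*}
\Phi(u)(t):=K_t u_0 - \int_0^t K_{t-\tau}Q\Big(\tfrac{u^2(\cdot,\tau)}{2}\Big)\,d\tau - \int_0^t K_{t-\tau}Q\big(a(\cdot)\,u(\cdot,\tau)\big)\,d\tau .
\end{equation*}
First I would check that $\Phi$ is a well-defined map from $X_{s,T}$ to itself. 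By the Proposition, $\{K_t\}$ is a unitary group on $X_s$, and it is strongly continuous there since its Fourier symbol is continuous in $t$ (dominated convergence on the frequency side). Because $s>\tfrac32>1$, $H^s(\R^2)$ is a Banach algebra continuously embedded in $L^\infty$; hence $u^2/2\in H^s$ whenever $u(\tau)\in H^s\supset X_s$, and $a\,u(\tau)\in H^s$ since $a\in H^s$. Since $Q$ maps $H^s(\R^2)$ boundedly into $X_s$ (again by the Proposition), the two integrands are continuous $X_s$-valued functions of $\tau$, their Bochner integrals make sense, and $\Phi(u)\in C([0,T];X_s)$.

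Next I would establish the stability and contraction estimates. Using unitarity of $K_t$ on $X_s$, the bound $\|Q f\|_{X_s}\lesssim\|f\|_{H^s}$, the embedding $X_s\hookrightarrow H^s$, and the algebra estimate $\|fg\|_{H^s}\lesssim\|f\|_{H^s}\|g\|_{H^s}$, I get, for $u,v\in X_{s,T}$ and $0\le t\le T$,
\begin{align*}
\|\Phi(u)(t)\|_{X_s} &\le \|u_0\|_{X_s} + C\int_0^t\big(\|u(\tau)\|_{H^s}^2 + \|a\|_{H^s}\|u(\tau)\|_{H^s}\big)\,d\tau\\
 &\le \|u_0\|_{X_s} + CT\big(\|u\|_{X_{s,T}}^2 + \|a\|_{H^s}\|u\|_{X_{s,T}}\big),
\end{align*}
and, using $u^2-v^2=(u+v)(u-v)$ and $au-av=a(u-v)$,
\begin{align*}
\|\Phi(u)(t)-\Phi(v)(t)\|_{X_s} &\le C\int_0^t\big(\|u(\tau)+v(\tau)\|_{H^s}+\|a\|_{H^s}\big)\|u(\tau)-v(\tau)\|_{H^s}\,d\tau\\
 &\le CT\big(\|u\|_{X_{s,T}}+\|v\|_{X_{s,T}}+\|a\|_{H^s}\big)\|u-v\|_{X_{s,T}}.
\end{align*}
Taking $R:=2\|u_0\|_{X_s}$ and then $T=T(R,\|a\|_{H^s})>0$ small enough that $CT(R+\|a\|_{H^s})\le\tfrac12$, the map $\Phi$ sends the closed ball $\overline{B}(0,R)\subset X_{s,T}$ into itself and is a strict contraction on it; the Banach fixed point theorem produces the unique solution $u\in C([0,T];X_s)$ of \eqref{duhamel}. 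Uniqueness in the whole class $C([0,T];X_s)$ (not merely in the ball) follows by a standard Gronwall argument comparing two solutions on a short interval and iterating, and the Lipschitz dependence of the fixed point on $u_0$ gives the asserted continuous dependence.

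Finally I would recover the extra conclusions. Membership $u\in C([0,T];X_s)$ unwinds, by the definition $X_s=\{f\in H^s:\partial_x^{-1}f_y\in H^{s-1}\}$, into $u\in C([0,T];H^s(\R^2))$ together with $\partial_x^{-1}u_y\in C([0,T];H^{s-1}(\R^2))$. For the time derivative, solving \eqref{e1} for $u_t$ gives
\begin{equation*}
u_t=(1-\partial_x^2)^{-1}\Big[-\partial_x\Big(u+\tfrac{u^{p+1}}{p+1}\Big)-\gamma\,\partial_x^{-1}u_{yy}-au\Big];
\end{equation*}
here the multipliers $\tfrac{i\xi}{1+\xi^2}$ and $\tfrac{1}{1+\xi^2}$ are bounded, so $(1-\partial_x^2)^{-1}\partial_x(u+u^{p+1}/(p+1))$ and $(1-\partial_x^2)^{-1}(au)$ lie in $C([0,T];H^s)$ (using the algebra property for $u^{p+1}$), while writing $\partial_x^{-1}u_{yy}=\partial_y(\partial_x^{-1}u_y)$ with $\partial_x^{-1}u_y\in C([0,T];H^{s-1})$ shows the remaining term is in $C([0,T];H^{s-2})$. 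Hence $u\in C^1([0,T];H^{s-2}(\R^2))$. The main obstacle is not conceptual but organizational: everything hinges on the mapping properties of $K_t$ and $Q$ inherited from \cite{bona2002}, so the real point is to verify that the new damping term $a(\cdot)u$ fits the same scheme, which it does because $a\in H^s$ with $s>\tfrac32$ and $H^s(\R^2)$ is a Banach algebra, so $au$ is estimated exactly like the nonlinearity $u^2/2$ and enters the fixed point only through the harmless linear factor $\|a\|_{H^s}$; the restriction $s>\tfrac32$ (rather than merely $s>1$, which already suffices for the algebra property) is what is needed for the embedding results of \cite{bona2002} used elsewhere (e.g. $W_\beta\hookrightarrow C_b$ with $\beta=s-1>\tfrac12$) and for consistency with the stabilization analysis of Section~\ref{stabilizationsection}.
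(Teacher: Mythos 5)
Your proposal is correct and follows essentially the same route as the paper: a contraction mapping argument on the Duhamel formula in $C([0,T];X_s)$, using the unitarity of $K_t$, the boundedness of $Q$ (hence $K_tQ$) from $H^s$ into $X_s$, and the Banach algebra property of $H^s(\R^2)$ for $s>\tfrac32$ to handle both the nonlinearity and the damping term through $\|a\|_{H^s}$. You in fact supply details the paper leaves implicit (strong continuity in $t$, uniqueness beyond the ball via Gronwall, and the recovery of the $C^1(0,T;H^{s-2})$ regularity from the equation), so no gap to report.
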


\begin{proof} 
The strategy is to first show that corresponding to given $\phi \in X_s$, the integral equation \eqref{e1} has a solution in $C([0, T] ; X_s)$ for suitable values of $T>0$. This will be accomplished via the contraction-mapping principle. For given $\phi$ in $X_s$ and any $v \in C(0, T ; X_s)$, define the action of the operator $A=A_\phi$ on $v$ to be
\begin{equation*}
    A_{\phi}(v)(t)=K_t\phi(x,y) - \int_0^t K_{t-\tau}Q\left( \frac{v^{2}(\cdot, \tau)}{2}\right)(x,y)d\tau - \int_0^t K_{t-\tau}Q\left(a(\cdot)v(\cdot,\tau)\right)(x,y)d\tau,
\end{equation*}
for $(x, y, t) \in \mathbb{R}^2 \times[0, T]$. The aim is just as above, to show that the operator $A$ is a contraction of the closed ball 
$$B_R(0)=\{u \in C(0, T ; X_s): \sup_{t \in [0,T]} \|u(t)\|_{X_s}\leq R\}$$
provided $R$ and $T$ are well chosen. The crux of the matter is to understand the temporal integral in the definition of $A_\phi$. First, let us show that $A_\phi(B[0,R]) \subset B[0,R]$ for $R>0$ to be chosen later. From the continuity of the operator $K_tQ:H^{s}(\mathbb{R}^{2})\rightarrow X_s$ and taking into account that $H^s$ is a Banach-algebra for $s>\frac{3}{2}$, we obtain
\begin{equation*}
    \begin{aligned}
        \|A_\phi(v)(t)\|_{X_s}\leq& \|K_t\phi(\cdot)\|_{X_s}+\int_{0}^{t}\|K_{t-\tau}Q \left(\frac{v^{p+1}(\cdot,\tau)}{p+1}\right)\|_{X_s} d\tau+\int_{0}^{t}\|K_{t-\tau}Q(a(\cdot)u(\cdot,\tau))\|_{X_s}d\tau\\
        \leq& \|\phi(\cdot)\|_{X_s}+C\int_{0}^{t}\|\left(\frac{v^{p+1}(\cdot,\tau)}{p+1}\right)\|_{H^s} d\tau+C\int_{0}^{t}\|a(\cdot)v(\cdot,\tau)\|_{H^s}d\tau\\
        \leq& \|\phi(\cdot)\|_{X_s}+\frac{C}{p+1}\int_{0}^{t}\|v(\cdot,\tau)\|_{H^s}^{p+1} d\tau+C\|a(\cdot)\|_{H^s}\int_{0}^{t}\|v(\cdot,\tau)\|_{H^s}d\tau\\
        \leq&\|\phi(\cdot)\|_{X_s}+\frac{C}{p+1}\|v\|_{C(0,T;X_s)}^{p+1}T+\|a(\cdot)\|_{H^{s}}\|v\|_{C(0,T;X_s)}T\\
        \leq&\|\phi(\cdot)\|_{X_s}+\frac{C}{p+1}R^{p+1}T+\|a(\cdot)\|_{H^{s}}RT\\
    \end{aligned}
\end{equation*}
So, taking $R=3\|\phi(\cdot)\|_{X_s}$ and $T>0$ satisfying 
\begin{equation*}
    T<\frac{p+1}{3CR^{p}} \hbox{ and } T<\frac{1}{3\|a(\cdot)\|_{H^{s}}}
\end{equation*}
we obtain
\begin{equation*}
        \|A_\phi(v)\|_{C(0,T;X_s)}\leq R .
\end{equation*}
On the other hand,
\begin{equation*}
    \begin{aligned}
\left\|\int_0^t K_{t-\tau} Q\left(\frac{1}{p+1} u^{p+1}\right) d \tau\right. & \left.-\int_0^t K_{t-\tau} Q\left(\frac{1}{p+1} v^{p+1}\right) d \tau\right\|_{X_s} \\
& \leqslant \int_{0}^{T}\left\|K_{t-\tau} Q\left(\frac{1}{p+1}\left(u^{p+1}-v^{p+1}\right)\right)\right\|_{X_s}d\tau \\
& \leqslant C\int_{0}^{T}\left\|\frac{1}{p+1}\left(u^{p+1}-v^{p+1}\right)\right\|_{H^s}d\tau \\
& \leqslant \frac{C}{p+1}\int_{0}^{T}\left\|(u-v)\sum_{i=0}^{p} u^{p-i}v^{i}\right\|_{H^s}d\tau\\
& \leqslant \frac{C}{p+1}\int_{0}^{T}\left(\sum_{i=0}^{p}R^{p-i}R^i\right) \|u-v\|_{H^s}d\tau\\
& \leqslant \frac{C}{p+1} T \left(\sum_{i=0}^{p}R^{p-i}R^i\right) \|u-v\|_{C(0,T;X_s)}.
\end{aligned}
\end{equation*}
and
\begin{equation*}
    \begin{aligned}
\left\|\int_0^t K_{t-\tau} Q\left(a(\cdot)(u(\cdot,\tau)-v(\cdot,\tau)\right) d \tau\right\|_{X_s}  &\leqslant  \int_0^t \left\|K_{t-\tau} Q\left(a(\cdot)(u(\cdot,\tau)-v(\cdot,\tau)\right)\right\|_{X_s} d \tau\\
& \leqslant  C\int_0^t \left\|a(\cdot)(u(\cdot,\tau)-v(\cdot,\tau))\right\|_{H^s} d \tau\\
& \leqslant  C\|a(\cdot)\|_{H^s}\int_0^t \left\|u(\cdot,\tau)-v(\cdot,\tau)\right\|_{H^s} d \tau\\
& \leqslant   CT\|a(\cdot)\|_{H^s}\|u-v\|_{C(0,T;X_s)}.
\end{aligned}
\end{equation*}
Combining the previous estimates, we obtain
\begin{equation*}
    \|A_\phi(u)-A_\phi(v)\|_{C(0,T;X_s)}\leq T\left(\frac{C}{p+1} \left(\sum_{i=0}^{p}R^{p-i}R^i\right)+C\|a(\cdot)\|_{H^s}\right) \|u-v\|_{C(0,T;X_s)}
\end{equation*}
Therefore, taking $T>0$ satisfying 
\begin{equation*}
    T< \frac{p+1}{4C \left(\sum_{i=0}^{p}R^{p-i}R^i\right)} \hbox{ and } T< \frac{1}{4C\|a(\cdot)\|_{H^s}},
\end{equation*}
it follows that $A_\phi$ is a contraction. Consequently, we can apply the Banach fixed point theorem to conclude that the map $A_\phi$ has a unique fixed point.
\end{proof}

To establish global well-posedness, note that the solution formula \eqref{duhamel} implies that the solution of the BBM-KP equation \eqref{e1} can be written in the form:
$$
u(x,y,t) = u_1(x,y,t) +   u_2(x,y,t) 
$$
where $u_1$ is solution of 
\begin{equation}
\begin{cases} 
		u_t+ u_x + u^pu_x -u_{xxt}+ \gamma \partial_x^{-1}
    u_{yy} =0,& (x,y)\in \R^2,\ t>0. \\
	u(x,y,0)=u_0(x,y), & (x,y)\in \R^2,
\end{cases} 
\end{equation}
and $u_2$ is solution of 
\begin{equation}
\begin{cases} 
		u_t+ u_x  -u_{xxt}+ \gamma \partial_x^{-1}
    u_{yy} + a(x,y)u=0,& (x,y)\in \R^2,\ t>0. \\
	u(x,y,0)=0, & (x,y)\in \R^2,
\end{cases} 
\end{equation}
with 
$$
u_2(x,y,t)=- \int_0^t K_{t-\tau}Q\left(a(\cdot)u(\cdot,\tau)\right)d\tau.
$$

\begin{theorem}\label{Globalwellposedness}
Let  $p \leq 2$ $s>\frac{3}{2}$, $a \in H^s(\R^2)$  and $u_0 \in X_s \cap W_{1}\left(\mathbb{R}^2\right)$   be such that $\partial_x^{-2} u_{0, y y} \in L^2\left(\mathbb{R}^2\right)$. Then for any $T>0$, the initial value problem \eqref{e1} has a unique solution
$$
u \in C\left(0, T ; X_s\right) \cap C^1\left(0, T ; H^{s-2}\left(\mathbb{R}^2\right)\right)
$$
In particular, for any $T>0$, the solution $u$ of \eqref{e1} has the properties
$$
u \in C\left(0, T ; H^s\left(\mathbb{R}^2\right)\right), \quad \partial_x^{-1} u_y \in C\left(0, T ; H^{s-1}\left(\mathbb{R}^2\right)\right)
$$
For any $T>0$, the solution map $u_0 \mapsto u$ is locally Lipschitz-continuous from $X_s \cap W_{1}\left(\mathbb{R}^2\right)$  into these spaces.
\end{theorem}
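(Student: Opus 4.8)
The plan is to upgrade the local solution furnished by Theorem~\ref{Localwellposedness} to a global one by means of a priori estimates, following the globalization scheme of Bona, Liu and Tom \cite{bona2002} for the undamped RLW--KP hierarchy (case $\alpha=2$); the only genuinely new ingredient is to check that the localized damping $a\,u$ does not spoil those estimates. Since Theorem~\ref{Localwellposedness} produces a solution on a time interval whose length depends only on $\|u_0\|_{X_s}$ and $\|a\|_{H^s}$, it suffices to establish that, for every $T>0$,
\[
\sup_{0\le t\le T}\|u(t)\|_{X_s}<\infty .
\]
Given this bound, the solution is continued step by step up to time $T$; uniqueness on $[0,T]$ is inherited from the contraction estimates already obtained for $A_\phi$ in the proof of Theorem~\ref{Localwellposedness}, and the local Lipschitz dependence of $u_0\mapsto u$ on $[0,T]$ follows by applying those same estimates together with Gronwall's inequality to the difference of two solutions.

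The first step is to record the low--regularity a priori bounds. Pairing \eqref{e1} with $2u$ in $L^2(\R^2)$, the transport term, the nonlinearity $u^pu_x=\tfrac1{p+1}\partial_x(u^{p+1})$ and the dispersive term $\gamma\partial_x^{-1}u_{yy}$ all integrate to zero, leaving the dissipation identity \eqref{diss-energy}; since $a\ge0$ this already yields $\|u(t)\|_{H^1_x}\le\|u_0\|_{H^1_x}$ for all $t\ge0$. Next, the undamped BBM--KP flow conserves the Hamiltonian
\[
H(u)=\int_{\R^2}\Big(\tfrac12 u^2+\tfrac{u^{p+2}}{(p+1)(p+2)}+\tfrac{\gamma}{2}\,(\partial_x^{-1}u_y)^2\Big)\,dx\,dy ,
\]
and in the damped case $\tfrac{d}{dt}H(u)$ is controlled by $\|a\|_{L^\infty}$ times quantities already estimated, so $H(u(t))$ remains bounded on $[0,T]$. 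Because $u_0\in W_1(\R^2)$ we have $\partial_x^{-1}u_{0,y}\in L^2(\R^2)$; combining the bounds on $H(u(t))$ and $\|u(t)\|_{H^1_x}$ with the anisotropic Sobolev embedding $V_1(\R^2)\hookrightarrow L^{p+2}(\R^2)$ (valid for $p\le4$; see \cite[Lemma~2.1]{bona2002}) to absorb the term $\tfrac{1}{(p+1)(p+2)}\int u^{p+2}$, one obtains an a priori bound on $\|\partial_x^{-1}u_y(t)\|_{L^2}$, hence on $\|u(t)\|_{V_1}$, on every $[0,T]$; here it is the lack of coercivity of $H$ that forces a restriction on $p$. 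Finally, using the extra hypothesis $\partial_x^{-2}u_{0,yy}\in L^2(\R^2)$, an energy estimate for the equation satisfied by $\partial_x^{-1}u_y$ (obtained by applying $\partial_x^{-1}\partial_y$ to \eqref{e1}) together with the smoothing of $Q$ (whose symbol $i\xi/(1+|\xi|^2)$ gains one $x$--derivative) bootstraps this to an a priori bound on $\|u(t)\|_{W_1}$ on every $[0,T]$.

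Once $\|u(t)\|_{W_1}$ is under control, the embedding $W_1(\R^2)\hookrightarrow C_b(\R^2)$ (cf.\ \cite[Lemma~3.3]{bona2002}) gives $M(T):=\sup_{0\le t\le T}\|u(t)\|_{L^\infty}<\infty$. Returning to the integral equation \eqref{duhamel}, using that $K_t$ is unitary and $K_tQ\colon H^s(\R^2)\to X_s$ is bounded (Corollary~\ref{coro1}), the Moser--type bound $\|u^{p+1}\|_{H^s}\le C\|u\|_{L^\infty}^p\|u\|_{H^s}$ (valid for $s>3/2$), and the boundedness of multiplication by $a\in H^s$ on the algebra $H^s(\R^2)$, one arrives at
\[
\|u(t)\|_{X_s}\le\|u_0\|_{X_s}+C\int_0^t\big(M(T)^p+\|a\|_{H^s}\big)\|u(\tau)\|_{X_s}\,d\tau ,\qquad 0\le t\le T ,
\]
so Gronwall's inequality gives $\sup_{0\le t\le T}\|u(t)\|_{X_s}\le\|u_0\|_{X_s}\exp\!\big(C(M(T)^p+\|a\|_{H^s})T\big)$, which is the bound needed to iterate Theorem~\ref{Localwellposedness} all the way to time $T$. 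The remaining assertions $u\in C(0,T;H^s(\R^2))$ and $\partial_x^{-1}u_y\in C(0,T;H^{s-1}(\R^2))$ are just the definition of $X_s$ unpacked, while $u\in C^1(0,T;H^{s-2}(\R^2))$ follows by solving \eqref{e1} for $u_t=(1-\partial_x^2)^{-1}\big(-u_x-\tfrac1{p+1}\partial_x(u^{p+1})-\gamma\partial_x^{-1}u_{yy}-au\big)$ and using the continuity of $u$ in $X_s$.

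The hardest part should be the energy--level a priori estimate, i.e.\ the passage from the dissipation identity to an a priori bound on $\|u(t)\|_{W_1}$: the Hamiltonian $H$ is not coercive (the $u^{p+2}$ term is sign--indefinite when $p$ is odd, and the $(\partial_x^{-1}u_y)^2$ term enters with the ``wrong'' sign when $\gamma=-1$), so its conservation must be played off against the decaying energy $E$ and the sharp embedding $V_1\hookrightarrow L^{p+2}$ --- which is exactly why the restriction $p\le2$ and the extra hypotheses $u_0\in W_1(\R^2)$, $\partial_x^{-2}u_{0,yy}\in L^2(\R^2)$ are imposed --- and one must verify term by term that the localized damping $a\,u$ contributes only quantities absorbable by Gronwall. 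A secondary technical point: the integrations by parts underlying the conservation--law identities are rigorous only for sufficiently regular, spatially decaying solutions, so the a priori estimates should first be derived for mollified initial data and then transferred to the general case using the continuous dependence in Theorem~\ref{Localwellposedness}.
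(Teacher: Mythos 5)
Your proposal is workable in outline, but it takes a genuinely different route from the paper. The paper does not re-derive any energy or Hamiltonian a priori estimates: it writes the solution as $u=u_1+u_2$, where $u_1$ solves the \emph{undamped} BBM-KP problem with data $u_0$ and $u_2=-\int_0^t K_{t-\tau}Q\left(a\,u(\cdot,\tau)\right)d\tau$ is the damping contribution in Duhamel's formula \eqref{duhamel}; the global bound $\sup_{[0,T]}\|u_1\|_{X^s}\leq C\|u_0\|_{X^s}$ is simply quoted from \cite[Theorem 4.2]{bona2002}, while $u_2$ is estimated through the boundedness of $K_tQ:H^s\to X_s$ (Corollary \ref{coro1}) and the Banach-algebra property of $H^s$, $s>3/2$, yielding $\sup_{[0,T]}\|u_2\|_{X^s}\leq CT\|a\|_{H^s}\sup_{[0,T]}\|u\|_{X^s}$ and hence the a priori bound \eqref{e160} that lets the local solution of Theorem \ref{Localwellposedness} be continued to an arbitrary $T$. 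You instead fold the damping directly into the conservation-law machinery of Bona--Liu--Tom (dissipation of $E$, near-conservation of the Hamiltonian, the embeddings $V_1\hookrightarrow L^{p+2}$ and $W_1\hookrightarrow C_b(\R^2)$, then Gronwall on the integral equation), i.e.\ you essentially re-prove the undamped globalization with the perturbation $a\,u$ carried along. What each buys: your Gronwall formulation makes the absorption of the damping term explicit (the paper's estimate for $u_2$ involves the unknown $u$ itself, so the implicit smallness-in-$T$ plus iteration is left to the reader) and it shows exactly where the hypotheses $p\leq 2$, $u_0\in W_1(\R^2)$ and $\partial_x^{-2}u_{0,yy}\in L^2(\R^2)$ enter; the price is that the step you yourself flag as hardest --- controlling $\frac{d}{dt}H$ under damping, where the variational derivative brings in $\partial_x^{-2}u_{yy}$ and the indefinite terms must be absorbed via $V_1\hookrightarrow L^{p+2}$ --- is precisely the content of \cite{bona2002} that the paper avoids redoing, so on your route that computation (together with the regularization argument you mention) must be carried out in full detail, whereas citing \cite[Theorem 4.2]{bona2002} for the undamped part reduces the new work to the single damping estimate.
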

\begin{proof}
From \cite[Theorem 4.2]{bona2002}, it follows that 
$$ \sup_{t\in[0,T]}\|u_1(\cdot,t)\|_{X^s} \leq C\|u_0\|_{X^s} $$
and Corollary \ref{coro1}, we deduce that 
$$ \sup_{t\in[0,T]}\|u_2(\cdot,t)\|_{X^s} \leq  \int_0^t \|K_{t-\tau}Q(a(\cdot)u(\cdot,\tau))\|_{X^s}d\tau \leq C \int_0^t \|a(\cdot)u(\cdot,\tau))\|_{H^s}d\tau\leq C\|u_0\|_{X^s} $$
By using the fact that $H^s(\R^2)$ is a Banach-algebra for $s > \frac{3}{2}$, we deduce that
$$ \sup_{t\in[0,T]}\|u_2(\cdot,t)\|_{X^s} \leq C T\|a\|_{H^s}\sup_{t\in[0,T]}\|u(\cdot,t)\|_{X^s}.$$
Then, by from Theorem \ref{Localwellposedness}, it yields that
\begin{equation}\label{e160}
    \sup_{t\in[0,T]}\|u(\cdot,t)\|_{X^s} \leq C \left( 1 + T\|a\|_{H^s} \right)\|u_0\|_{X^s}.
\end{equation}
\textcolor{black}{Then, the \textit{a priori}} estimation \eqref{e160} implies the global well-posedness.
\end{proof}

Let $E(t)$ be the energy defined in \eqref{energy}. Next, we prove that the energy $E(t)$ is a decreasing function of $t$.
 \begin{proposition}
Let  $s>\frac{3}{2}$, $p \leq 2$ , $a \in H^s(\R^2)$ satisfying \eqref{hyp a}  and $u_0 \in X_s \cap W_{1}\left(\mathbb{R}^2\right)$   be such that $\partial_x^{-2} u_{0, y y} \in L^2\left(\mathbb{R}^2\right)$. Consider the energy $E(t)$ given by \eqref{energy}. Then, we have
\begin{equation}\label{diss-energy_1}
\displaystyle \frac{d}{dt}E(t) =  - \int_{\mathbb{\R}^2} a(x,y) u^2  dxdy,
\end{equation}
 \end{proposition}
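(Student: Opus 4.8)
The plan is to derive the energy identity \eqref{diss-energy_1} directly by differentiating $E(t)=\tfrac12\|u(\cdot,t)\|_{H^1_x}^2$ in time and using the equation \eqref{e1}. By Theorem \ref{Globalwellposedness} the solution satisfies $u\in C(0,T;X_s)\cap C^1(0,T;H^{s-2}(\R^2))$ with $s>3/2$, so $u_t\in C(0,T;H^{s-2})$ and all the spatial terms in \eqref{e1} make sense classically; moreover the extra regularity $\partial_x^{-2}u_{0,yy}\in L^2$ is propagated, which is exactly what is needed to justify the integration by parts in the $y$-direction below. First I would write, using the definition of the $H^1_x$-norm via the Fourier transform,
\[
E(t)=\frac12\int_{\R^2}(1+\xi^2)|\widehat u(\xi,\eta,t)|^2\,d\xi\,d\eta
     =\frac12\int_{\R^2}\bigl(u^2+u_x^2\bigr)\,dx\,dy,
\]
so that
\[
\frac{d}{dt}E(t)=\int_{\R^2}\bigl(u\,u_t+u_x\,u_{xt}\bigr)\,dx\,dy
              =\int_{\R^2} u\,\bigl(u_t-u_{xxt}\bigr)\,dx\,dy,
\]
the last equality by integration by parts in $x$ (no boundary terms since $u(\cdot,t)\in H^s$ with $s$ large enough for the relevant decay). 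The differentiation under the integral sign is legitimate because $t\mapsto u(\cdot,t)$ is $C^1$ into $H^{s-2}$ and continuous into $H^s$, so the integrand and its $t$-derivative are dominated locally uniformly.

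Next I would substitute the equation in the form $u_t-u_{xxt}=-(u_x+u^pu_x+\gamma\partial_x^{-1}u_{yy}+au)$, obtaining
\[
\frac{d}{dt}E(t)=-\int_{\R^2} u\,u_x\,dx\,dy
               -\int_{\R^2} u\,u^p u_x\,dx\,dy
               -\gamma\int_{\R^2} u\,\partial_x^{-1}u_{yy}\,dx\,dy
               -\int_{\R^2} a\,u^2\,dx\,dy.
\]
The first integral vanishes since $u\,u_x=\tfrac12\partial_x(u^2)$, and likewise the second vanishes since $u^{p+1}u_x=\tfrac{1}{p+2}\partial_x(u^{p+2})$ and $u\in H^s\hookrightarrow L^\infty\cap L^2$ (with $u_x\in L^2$) guarantees $u^{p+2}\in W^{1,1}_x$ for $p\le 2$, so the $x$-integral of a pure $x$-derivative is zero. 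For the dispersive term I would write $\partial_x^{-1}u_{yy}=\partial_y(\partial_x^{-1}u_y)$ and integrate by parts in $y$:
\[
\int_{\R^2} u\,\partial_x^{-1}u_{yy}\,dx\,dy
 =-\int_{\R^2} u_y\,\partial_x^{-1}u_y\,dx\,dy;
\]
then integrating by parts in $x$ (writing $u_y=\partial_x(\partial_x^{-1}u_y)$) gives
\[
\int_{\R^2} u_y\,\partial_x^{-1}u_y\,dx\,dy
 =-\int_{\R^2}(\partial_x^{-1}u_y)\,\partial_x(\partial_x^{-1}u_y)^{\!}\cdot\frac{1}{1}\,dx\,dy
 =-\frac12\int_{\R^2}\partial_x\bigl((\partial_x^{-1}u_y)^2\bigr)\,dx\,dy=0,
\]
so the whole dispersive contribution drops out. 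This leaves precisely $\frac{d}{dt}E(t)=-\int_{\R^2}a\,u^2\,dx\,dy$, as claimed.

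The main obstacle is the rigorous justification of the two integrations by parts in the dispersive term: one must know that $\partial_x^{-1}u_y$ and $\partial_x^{-2}u_{yy}$ are genuinely in $L^2(\R^2)$ (not merely formal Fourier multipliers) for every $t$, and that the intermediate products decay at infinity in the right variables so that no boundary terms survive. This is where the hypotheses $u_0\in X_s\cap W_1(\R^2)$ and $\partial_x^{-2}u_{0,yy}\in L^2(\R^2)$ enter: Theorem \ref{Globalwellposedness} (together with the mapping properties of $K_t$ on $W_2$, $\widetilde W_2$ recorded in the Proposition) propagates exactly this structure, so that $u(\cdot,t)$ stays in the class where $\frac{\eta}{\xi}\widehat u$ and $\frac{\eta^2}{\xi^2}\widehat u$ are square-integrable. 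A clean way to bypass the pointwise boundary-term discussion altogether is to carry out the computation on the Fourier side: $\frac{d}{dt}E(t)=\mathrm{Re}\int_{\R^2}(1+\xi^2)\widehat u\,\overline{\widehat u_t}\,d\xi\,d\eta$, and from \eqref{e1} one has $(1+\xi^2)\widehat u_t=-i\xi\widehat u-i\xi\widehat{(u^{p+1}/(p+1))}+i\gamma\frac{\eta^2}{\xi}\widehat u-\widehat{au}$; pairing against $\overline{\widehat u}$, the first term is purely imaginary (so its real part vanishes), the third term $i\gamma\frac{\eta^2}{\xi}|\widehat u|^2$ is purely imaginary as well, the nonlinear term contributes $-\mathrm{Re}\,i\int\xi\,\widehat{(u^{p+1})}\,\overline{\widehat u}\,/(p+1)=-\frac{1}{p+1}\int\partial_x\!\big(\tfrac{u^{p+2}}{p+2}\big)\cdots=0$ by Plancherel and the fundamental theorem of calculus, and the damping term yields $-\mathrm{Re}\int\widehat{au}\,\overline{\widehat u}=-\int au^2$. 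This reduces everything to Plancherel's theorem plus the integrability already supplied by the well-posedness theorem, which I would present as the proof.
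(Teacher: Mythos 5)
Your argument is essentially the paper's own proof: differentiate $E(t)$, integrate by parts in $x$ to reduce to $\int_{\R^2} u\,(u_t-u_{xxt})\,dx\,dy$, substitute the equation, and note that the transport, nonlinear, and dispersive contributions are integrals of exact $x$-derivatives (the last after setting $f=\partial_x^{-1}u_y$ and integrating by parts in $y$), leaving exactly $-\int_{\R^2} a\,u^2\,dx\,dy$. The paper performs this same formal computation (written only for $p=1$, via $-\tfrac13\int (u^3)_x$ and $\tfrac12\int (f^2)_x$); your additional remarks on justifying the integrations by parts and the equivalent Fourier-side computation elaborate on, but do not depart from, that route.
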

 \begin{proof}
 Consider $f_x(x,y)= u_y(x,y)$, hence we get 
 \begin{align*}
 \frac{d}{dt}E(t) &=\int_{\R^2}u(t)u_t(t) dxdy + \int_{\R^2}u_x(t)u_{xt}(t) dxdy \\
 &= \int_{\R^2}u(t) \Big (u_t(t) dxdy -u_{xxt}(t)\Big) dxdy \\
 &= \int_{\R^2}u(t) \Big (-u(t)u_x(t)- \gamma\partial_x^{-1} u_{yy}(t)-a(x,y)u(t)\Big ) dxdy \\
  &=-\frac{1}{3} \int_{\R^2}(u^3(t))_xdxdy- \gamma\int_{\R^2}u(t)f_{y}(t)dydx-\int_{\R^2}a(x,y)u^2(t) dxdy \\
    &= \gamma\int_{\R^2}u_y(t)f(t)dydx-\int_{\R^2}a(x,y)u^2(t) dxdy     = \frac12 \gamma\int_{\R^2}(f^2(t))_xdxdy-\int_{\R^2}a(x,y)u^2(t) dxdy \\
 &=  -\int_{\R^2}a(x,y)u^2(t) dxdy 
 \end{align*}
 \end{proof}
\color{black}

\subsection{Unique Continuation Property} \label{secUCP}

In this section, we establish the Unique Continuation Property (UCP) for the distributional solutions of the BBM-KP equations. It is important to note that in order to achieve an exponential decay rate for our problem, we must reduce the stabilization problem to a UCP for a distributional solution of \eqref{e1} without the damping term present. In \cite{mammeri}, the authors established the UCP for solutions in \(C([-T,T]; \hat{X}^s(\mathbb{R}^2))\), where \(\hat{X}^s\) denotes the space of functions \(f\) in \(H^s(\mathbb{R}^2)\) such that \(\partial_x^{-1} f\) also belongs to \(H^s(\mathbb{R}^2)\), equipped with the norm
\[
|f|_s = \left(\|f\|_s^2 + \left\|\partial_x^{-1} f\right\|_s^2\right)^{1 / 2}.
\]
We need to make some modifications to adapt their approach, since our aim is to prove the UCP for distributional solutions supported in a compact set. This approach was introduced in \cite{constantin}, which is based on integration by parts.

Before presenting the proof of UCP for a compact support distributional solution, we need to establish the following previous results.
\vglue 0.4cm

Let $I$ be an open interval of $\mathbb{R}$ and for all $t \in I$, $T_t$ be an element of $\mathcal{D}'(\Omega)$. 

\begin{definition}
    Let $k \in \mathbb{N} \cup \{0\}$. We say that $(T_t) \in C^k(I, \mathcal{D}'(\Omega))$ if for all $\varphi \in C_0^{\infty}(\Omega)$ the map from $I$ to $\mathbb{C}$, $t \mapsto \langle T_t,\varphi \rangle$ is of class $C^k(I)$.
\end{definition}

\begin{proposition}\label{Zuily}
    Let $(T_t) \in C^k(I,\mathcal{D}'(\Omega))$. For all $0 \leq \ell \leq k$ and for all $t \in I$, there exists a distribution $T_t^{\ell}$ such that $(T_t^{\ell}) \in C^{k-\ell}(I,\mathcal{D}'(\Omega))$ and
    \begin{equation}
        \left[\left( \frac{d}{dt}\right)^{\ell}\langle T_t,\varphi \rangle \right](t_0)=\langle T_{t_0}^{(\ell)},\varphi \rangle, \quad \forall t_0 \in I, \forall \varphi \in C_{0}^{\infty}(\Omega).
    \end{equation}
\end{proposition}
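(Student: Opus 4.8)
The plan is to prove Proposition \ref{Zuily} by induction on $\ell$, reducing everything to the case $\ell = 1$, which is the genuinely substantive step. The statement to establish is essentially the classical fact (going back to Schwartz) that a $C^k$-in-time family of distributions admits distributional time-derivatives that are themselves continuous families; the point is that differentiation in $t$ and the (distributional) action on test functions commute, uniformly enough that the derivative is again a distribution and depends continuously (indeed $C^{k-\ell}$) on $t$.

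First I would treat the base step $\ell = 1$. Fix $t_0 \in I$ and define a linear functional $S_{t_0}$ on $C_0^\infty(\Omega)$ by $\langle S_{t_0}, \varphi \rangle := \frac{d}{dt}\langle T_t, \varphi\rangle\big|_{t=t_0}$, which makes sense by hypothesis $(T_t) \in C^k(I, \mathcal{D}'(\Omega)) \subset C^1(I,\mathcal{D}'(\Omega))$. Linearity in $\varphi$ is immediate from linearity of $t \mapsto \langle T_t, \varphi\rangle$ in $\varphi$ and linearity of differentiation. The real content is continuity of $S_{t_0}$ as a distribution, i.e. that $S_{t_0} \in \mathcal{D}'(\Omega)$: for this I would invoke the standard consequence of the Banach--Steinhaus theorem for $\mathcal{D}'(\Omega)$ — if a sequence of distributions converges pointwise on $C_0^\infty(\Omega)$, the limit is again a distribution. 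Concretely, write the difference quotient $R_h := h^{-1}(T_{t_0+h} - T_{t_0})$, which is a distribution for each small $h \neq 0$; by the $C^1$ hypothesis, $\langle R_h, \varphi\rangle \to \langle S_{t_0},\varphi\rangle$ as $h \to 0$ for every $\varphi$. Taking any sequence $h_n \to 0$, the Banach--Steinhaus/sequential-completeness property of $\mathcal{D}'(\Omega)$ gives $S_{t_0} \in \mathcal{D}'(\Omega)$. Set $T_{t_0}^{(1)} := S_{t_0}$.

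Next I would check that $(T_t^{(1)}) \in C^{k-1}(I, \mathcal{D}'(\Omega))$. By definition this means $t \mapsto \langle T_t^{(1)}, \varphi\rangle = \frac{d}{dt}\langle T_t, \varphi\rangle$ is of class $C^{k-1}(I)$ for every $\varphi \in C_0^\infty(\Omega)$ — but that is exactly the statement that $t \mapsto \langle T_t, \varphi\rangle$ is $C^k$, which is the hypothesis. So this part is essentially a tautology once the distributional nature of $T_t^{(1)}$ is secured. Then the general case follows by a clean induction: assuming the result for $\ell - 1$, apply the $\ell=1$ case to the family $(T_t^{(\ell-1)}) \in C^{k-\ell+1}(I,\mathcal{D}'(\Omega))$ to produce $T_t^{(\ell)} := (T_t^{(\ell-1)})^{(1)} \in C^{k-\ell}(I,\mathcal{D}'(\Omega))$, and the identity $\big[(\tfrac{d}{dt})^\ell \langle T_t,\varphi\rangle\big](t_0) = \langle T_{t_0}^{(\ell)},\varphi\rangle$ follows by composing the corresponding identities for $\ell - 1$ and for a single derivative.

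The main obstacle — really the only non-bookkeeping point — is the step asserting that the pointwise limit $S_{t_0}$ of the difference quotients $R_{h}$ is actually a distribution, i.e. continuous on $C_0^\infty(\Omega)$ with respect to its usual topology. This is not automatic from pointwise convergence of a family of linear functionals in general topological vector spaces; it works here because $\mathcal{D}'(\Omega)$ is sequentially complete (equivalently, $C_0^\infty(\Omega)$ is barrelled, so Banach--Steinhaus applies). I would state this explicitly as the key input, cite it (Schwartz, or Trèves, \emph{Topological Vector Spaces, Distributions and Kernels}), and note that one should pass through sequences $h_n \to 0$ to apply the sequential form. Everything else is formal manipulation of difference quotients and an induction, which I would not spell out in full detail.
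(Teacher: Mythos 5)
Your argument is correct: the difference-quotient construction, the appeal to sequential completeness of $\mathcal{D}'(\Omega)$ (Banach--Steinhaus on the barrelled space $C_0^{\infty}(\Omega)$) to identify the pointwise limit as a distribution, and the induction on $\ell$ constitute the standard proof of this classical fact. The paper itself gives no argument but simply cites Zuily (Proposition 3.2, p.\ 61), and your proof is essentially the one found there, so there is nothing to reconcile.
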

 \begin{proof}
    See [\cite{Zuily}, Proposition 3.2., page 61].
 \end{proof}

\begin{lemma}
    Let $u$ be a distributional solution of the Cauchy problem associated with the BBM-KP-II equation \eqref{e1} with $a(\cdot,\cdot)=0$, $\gamma=1$ and $p= 1$. If, for all $t \in [-T,T]$, $u(t)$ has compact support, then for all $t \in [-T,T]$ and $g \in C^\infty(\mathbb{R})$,
    \begin{equation*}
        \langle e^{x}g(y)(u-u_{xx})(t),1 \rangle_{\mathcal{E}'(\mathbb{R}^{2}),\mathcal{E}(\mathbb{R}^{2})}=0,
    \end{equation*}
where $\mathcal{E}'(\mathbb{R}^{2})=(C^\infty(\mathbb{R}^{2})'$ denotes the space of the distributions with compact support.
\end{lemma}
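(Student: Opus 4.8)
The plan is to recognize that this identity, despite its dynamical framing, is essentially elementary: it follows from the single fact that $e^{x}$ is an eigenfunction of $\partial_x^{2}$, together with the hypothesis that $u(t)$ is a \emph{compactly supported} distribution. The latter is what makes pairing against the unbounded weight $e^{x}g(y)$ legitimate, and it is the compact-support assumption --- not any structural feature of the BBM-KP-II flow --- that does the work in this particular statement.

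Concretely, I would fix $t\in[-T,T]$ and $g\in C^{\infty}(\mathbb{R})$ and proceed in three short steps. First, since $u(t)\in\mathcal{E}'(\mathbb{R}^{2})$ we also have $u_{xx}(t)=\partial_x^{2}u(t)\in\mathcal{E}'(\mathbb{R}^{2})$, hence $(u-u_{xx})(t)\in\mathcal{E}'(\mathbb{R}^{2})$ and the product $e^{x}g(y)\,(u-u_{xx})(t)$ is again a compactly supported distribution, so that the bracket $\langle e^{x}g(y)(u-u_{xx})(t),1\rangle_{\mathcal{E}',\mathcal{E}}$ is a well-defined number. Second, by the definition of the product of an $\mathcal{E}'$-distribution with a $C^{\infty}$ function and of the distributional derivative of such a distribution (both testable against an arbitrary element of $\mathcal{E}(\mathbb{R}^{2})$, with no boundary contribution),
\[
\big\langle e^{x}g(y)(u-u_{xx})(t),1\big\rangle_{\mathcal{E}',\mathcal{E}}
=\big\langle (u-u_{xx})(t),\,e^{x}g(y)\big\rangle
=\big\langle u(t),\,\big(1-\partial_x^{2}\big)\big(e^{x}g(y)\big)\big\rangle .
\]
Third, $\partial_x^{2}\big(e^{x}g(y)\big)=e^{x}g(y)$, so $\big(1-\partial_x^{2}\big)(e^{x}g(y))\equiv 0$ and the right-hand side is $\langle u(t),0\rangle=0$, which is the claim.

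I expect no genuine obstacle here; the only point requiring care is to remain strictly within the calculus of compactly supported distributions, so that the ``integration by parts'' $\langle u_{xx}(t),e^{x}g(y)\rangle=\langle u(t),\partial_x^{2}(e^{x}g(y))\rangle$ carries no boundary term --- this is exactly why the weight is taken to be the $\partial_x^{2}$-eigenfunction $e^{x}$ and why $u(t)$ is assumed compactly supported for every $t$. If one prefers a derivation that actually invokes the equation (and that dovetails with the lemmas to follow), one may instead pair \eqref{e1} (with $a\equiv 0$, $\gamma=1$, $p=1$) against $e^{x}g(y)\chi(t)$ for $\chi\in C_{0}^{\infty}((-T,T))$ and apply Proposition \ref{Zuily}: the contribution of $u_t-u_{xxt}$ vanishes identically by the same eigenfunction identity, which both reproduces the present statement and produces the differentiated relations --- such as $\langle (u_x+uu_x+\partial_x^{-1}u_{yy})(t),e^{x}g(y)\rangle=0$ --- that power the remainder of the unique continuation argument.
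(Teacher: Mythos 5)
Your proposal is correct and follows essentially the same route as the paper: both reduce the bracket to $\langle u(t),\,(1-\partial_x^{2})(e^{x}g(y))\rangle$ via the definition of distributional derivatives for compactly supported distributions (no boundary terms), and both conclude by the eigenfunction identity $\partial_x^{2}(e^{x}g(y))=e^{x}g(y)$, which is exactly the paper's choice $G=e^{x}g(y)$ solving $G-G_{xx}=0$. Your write-up is in fact more explicit than the paper's about why the pairing against the unbounded weight is legitimate.
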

\begin{proof}
    Let $G \in C^\infty(\mathbb{R}^{2},\mathbb{R})$ and $t \in [-T,T]$. Since $u(t)$ has compact support, from the definition of derivatives in the sense of distributions, we have
    \begin{equation*}
        \langle (u- u_{xx})(t),G(x,y)\rangle_{\mathcal{E}'(\mathbb{R}^{2}),\mathcal{E}(\mathbb{R}^{2})}=0,
    \end{equation*}
    if
    \begin{equation*}
        G(x,y)- G_{xx}(x,y)=0.
    \end{equation*}
    So, we set $G(x,y)=e^{x}g(y)$.
\end{proof}

\begin{theorem}\label{UCP}
Let $u$ be a distributional solution of the Cauchy problem associated with the BBM-KP-II equation \eqref{e1} with $a(\cdot,\cdot)=0$, $\gamma=1$ and $p=1$. Let us suppose that there exist $0 < B < +\infty$ and $D > C$ in $\R$, with $D-C<\pi$, such that for all $t \in [-T, T]$,
\begin{equation*}
\sup u(t) \subset [-B,B]\times [C,D].
\end{equation*}
Then $u$ vanishes identically.
\end{theorem}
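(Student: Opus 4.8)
The plan is to follow the Fourier-analytic unique continuation strategy of Mammeri \cite{mammeri}, adapted to distributional solutions with compact support via the integration-by-parts idea of \cite{constantin}. The starting point is the previous lemma: for every $g\in C^\infty(\R)$ and every $t\in[-T,T]$ we have $\langle e^x g(y)(u-u_{xx})(t),1\rangle = 0$. First I would differentiate this identity in $t$. Using Proposition \ref{Zuily} to justify that $t\mapsto\langle (u-u_{xx})(t),\varphi\rangle$ is differentiable and that the $t$-derivative commutes with the pairing, and then using the equation \eqref{e1} (with $a=0$, $\gamma=1$, $p=1$) in the form $(u-u_{xx})_t = -(u_x + uu_x + \partial_x^{-1}u_{yy})$, I would substitute to obtain
\[
\bigl\langle e^x g(y)\,\bigl(u_x + uu_x + \partial_x^{-1}u_{yy}\bigr)(t),\,1\bigr\rangle = 0 .
\]
Since $u(t)$ has compact support, $\partial_x^{-1}u_{yy}(t)$ makes sense (the $x$-primitive of a compactly supported distribution whose $x$-integral vanishes, which holds here because $u=\partial_x(\text{something})$ in the relevant class — this is exactly the kind of bookkeeping one checks against \eqref{e4}), and all pairings against $e^x g(y)$ are legitimate because $e^x g(y)\in C^\infty(\R^2)$ while the distribution is in $\mathcal{E}'(\R^2)$.

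Next I would integrate by parts in $x$ to move $\partial_x$ off $u$: because $e^x$ is its own $x$-derivative, terms like $\langle e^x g(y) u_x,1\rangle = -\langle e^x g(y) u,1\rangle$, $\langle e^x g(y)(uu_x),1\rangle = \langle e^x g(y)\,\tfrac12(u^2)_x,1\rangle = -\tfrac12\langle e^x g(y) u^2,1\rangle$, and $\langle e^x g(y)\partial_x^{-1}u_{yy},1\rangle = -\langle e^x g''(y)\,\partial_x^{-1}(\partial_x^{-1}u_{yy})\cdot(\dots)\rangle$ — more carefully, since $\partial_x$ acting on $e^x$ returns $e^x$, integrating the $\partial_x^{-1}$ twice against $e^x$ just reproduces $\langle e^x g''(y) u,1\rangle$ up to sign. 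Collecting everything, the first-derivative identity should reduce to a relation of the form
\[
\bigl\langle e^x\bigl(g(y)-g''(y)\bigr) u(t),1\bigr\rangle = c\,\bigl\langle e^x g(y)\,u^2(t),1\bigr\rangle
\]
for a fixed constant $c$. The crucial choice, exactly as in Mammeri's argument, is to pick $g$ to be an eigenfunction of $-\partial_y^2$ on the $y$-interval $(C,D)$: take $g(y)=\cos\bigl(y-\tfrac{C+D}{2}\bigr)$ (eigenvalue $1$), which is legitimate precisely because $D-C<\pi$ guarantees $g>0$ on $[C,D]\supset\supp_y u(t)$. With this $g$ the left side has $g-g''=0$, killing the \emph{linear} term and leaving $0 = c\,\langle e^x g(y) u^2(t),1\rangle$ with $e^x g(y)>0$ on the support; hence $u(t)^2\equiv 0$, i.e. $u(t)\equiv 0$.

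The step I expect to be the genuine obstacle is making the manipulations with $\partial_x^{-1}$ and $\partial_x^{-2}$ rigorous at the level of compactly supported distributions: one must show that $u(t)$ lies in a class where $\partial_x^{-1}u_{yy}$ and the iterated antiderivatives are again compactly supported distributions (not merely tempered ones with a constant/polynomial ambiguity), which forces the vanishing-moment conditions like \eqref{e4} and its second-order analogue, and one must check that the integration by parts in $x$ produces no boundary contribution — true because the distributions are compactly supported, so pairing with the entire function $e^x g(y)$ is unambiguous, but this needs to be said carefully. A secondary technical point is justifying differentiation under the distributional pairing and the substitution of the PDE; this is handled by Proposition \ref{Zuily} together with the fact that $u$ solves \eqref{e1} in $\mathcal{D}'$. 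Once these points are secured, the positivity argument with the cosine eigenfunction — the only place where the hypothesis $D-C<\pi$ is used — closes the proof immediately, and the reduction to $u\equiv 0$ on all of $[-T,T]$ is then automatic since $t$ was arbitrary.
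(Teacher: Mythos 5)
Your proposal follows essentially the same route as the paper: differentiate the identity $\langle e^{x}g(y)(u-u_{xx})(t),1\rangle=0$ in $t$ via Proposition \ref{Zuily}, substitute the equation, integrate by parts against $e^{x}g(y)$ to trade $\partial_x$ and $\partial_x^{-1}$ for factors of $e^{x}$ and move $\partial_y^{2}$ onto $g$, and then choose $g(y)=\cos(y+y_0)$ positive on the $y$-support (the only place $D-C<\pi$ is used), so that the surviving term $\langle e^{x}g(y)\,u^{2}(t),1\rangle=0$ forces $u\equiv 0$. One sign slip should be corrected: careful bookkeeping yields the combination $g+g''$ (as in the paper), not $g-g''$ --- your cosine indeed satisfies $g+g''=0$, whereas a solution of $g-g''=0$ would be $e^{y}$, which is positive everywhere and would render the hypothesis $D-C<\pi$ superfluous; with the signs fixed your argument coincides with the paper's.
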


\begin{proof}
     Thanks to Proposition \ref{Zuily} we have,
\begin{equation*}
        \langle e^{x}g(y)(u_t-u_{xxt})(t),1 \rangle_{\mathcal{E}'(\mathbb{R}^{2}),\mathcal{E}(\mathbb{R}^{2})}=0,
\end{equation*}
and since $u$ is a distributional solution of equation \eqref{e1}, we get
\begin{equation}
    0=\langle e^{x}g(y)(u_{x}+\alpha u^pu_{x}+ \partial_{x}^{-1}u_{yy})(t),1\rangle_{\mathcal{E}'(\mathbb{R}^{2}),\mathcal{E}(\mathbb{R}^{2})}.
\end{equation}
From  \cite[Lemma 2.1]{mammeri}, the derivative definition in the sense of distributions in the $x$-direction and taking into account that $u$ is essentially bounded, we have
\begin{equation*}
    0=\left\langle e^{x}g(y)\left(u+\frac{\alpha}{p+1}u^{p+1}\right)(t),1\right\rangle_{\mathcal{E}'(\mathbb{R}^{2}),\mathcal{E}(\mathbb{R}^{2})}+ \left\langle e^{x} g(y)u_{yy}(t),1 \right\rangle_{\mathcal{E}'(\mathbb{R}^{2}),\mathcal{E}(\mathbb{R}^{2})}.
\end{equation*}
Taking two derivatives in the $y$-direction, it follows that
\begin{equation*}
    0=\left\langle e^{x} \left( g(y) + g''(y) \right) u(t),1\right\rangle_{\mathcal{E}'(\mathbb{R}^{2}),\mathcal{E}(\mathbb{R}^{2})}+ \frac{\alpha}{p+1}\left\langle e^{x}g(y)u^{p+1}(t),1 \right\rangle_{\mathcal{E}'(\mathbb{R}^{2}),\mathcal{E}(\mathbb{R}^{2})}.
\end{equation*}
Our aim is to solve the ordinary differential equation
\begin{equation*}
    g(y)+  g''(y)=0 \hbox{ with } g(y)>0 
\end{equation*}
One solution is given by $g(y)=\cos \left(y+y_0\right)$, with $y_0$ to be chosen. Finally,  since $p=1$, we find
$$\int_{\mathbb{R}^{2}}e^{x}\cos \left(y+y_0\right) u^{p+1}(x,y,t) dxdy=0 \Rightarrow \int_C^D\int_{-B}^{B} e^{x}\cos \left(y+y_0\right) u^{2}(x,y,t) dxdy=0 .$$
Since $D-C < \pi$, there exists $y_0$ belongs to $(-C-\pi/2, -D + \pi/2)$ and this implies that $e^{x}\cos \left(y+y_0\right)>0$ on the support of $u(t)$.Then, $u$ vanishes identically.
\end{proof}

\begin{corollary}\label{UCP2}
Let $u$ be a distributional solution of the Cauchy problem associated with the BBM-KP-I equation \eqref{e1} with $a(\cdot,\cdot)=0$, $\gamma=-1$ and $p=1$. Let us suppose that there exist $0 < B < +\infty$ such that for all $t \in [-T, T]$,
\begin{equation}
\sup u(t) \subset [-B,B]\times [-B,B].
\end{equation}
Then $u$ vanishes identically.
\end{corollary}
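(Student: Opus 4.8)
The plan is to follow the proof of Theorem \ref{UCP} line by line, keeping careful track of the sign of $\gamma$. Fix $t\in[-T,T]$. Since $u(t)$ is compactly supported, Proposition \ref{Zuily} guarantees that $u_t(t)$ is again a compactly supported distribution, and the identity $(e^{x}g(y))_{xx}=e^{x}g(y)$ (valid for every $g\in C^\infty(\R)$), combined with the lemma used in the proof of Theorem \ref{UCP} now applied to $u_t(t)$, gives
\[
\langle e^{x}g(y)\,(u_t-u_{xxt})(t),\,1\rangle_{\mathcal{E}'(\R^2),\mathcal{E}(\R^2)}=0 .
\]
Inserting equation \eqref{e1} with $a\equiv 0$, $\gamma=-1$, $p=1$, that is $u_t-u_{xxt}=-u_x-uu_x+\partial_x^{-1}u_{yy}$, and then integrating by parts in $x$ exactly as in the proof of Theorem \ref{UCP} — using \cite[Lemma 2.1]{mammeri} to transform the antiderivative term, which is legitimate because $u(t)$, hence $u_{yy}(t)$, is compactly supported in $x$ with the zero-mean property inherent to the notion of distributional solution, and using that $u(t)$ is essentially bounded so that $u^{2}(t)$ is a genuine function — one arrives at
\[
0=\Big\langle e^{x}g(y)\Big(u+\tfrac12 u^{2}\Big)(t),\,1\Big\rangle-\Big\langle e^{x}g(y)\,u_{yy}(t),\,1\Big\rangle .
\]
The minus sign in front of the last term is the sole difference from the BBM-KP-II computation, and it is precisely the effect of $\gamma=-1$.

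Next I would integrate by parts twice in $y$ in the last term, transferring the two $y$-derivatives onto the smooth weight, which yields
\[
0=\big\langle e^{x}\big(g(y)-g''(y)\big)\,u(t),\,1\big\rangle+\tfrac12\,\big\langle e^{x}g(y)\,u^{2}(t),\,1\big\rangle .
\]
The crucial observation is that the ordinary differential equation needed to annihilate the first bracket is now $g''=g$, rather than the Helmholtz-type equation $g''=-g$ of the BBM-KP-II case. This equation admits strictly positive smooth solutions on all of $\R$ — for instance $g(y)=\cosh y$ — so, unlike in Theorem \ref{UCP}, no smallness hypothesis on the width of the $y$-support is needed. Choosing $g(y)=\cosh y$ kills the first bracket and leaves
\[
\int_{\R^2} e^{x}\,\cosh y\;u^{2}(x,y,t)\,dx\,dy=0 .
\]

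Finally, since $u(t)$ is supported in the compact box $[-B,B]\times[-B,B]$ — which is exactly what makes the pairing against the weight $e^{x}\cosh y$ meaningful, this weight being unbounded in both variables — and since $e^{x}\cosh y>0$ everywhere while $u^{2}\geq 0$, the vanishing of the integral forces $u(\cdot,\cdot,t)=0$ for that $t$. As $t\in[-T,T]$ was arbitrary, $u$ vanishes identically. I do not expect a genuine obstacle here: the statement is, if anything, easier than Theorem \ref{UCP}, because the sign $\gamma=-1$ replaces the oscillatory multiplier $\cos(y+y_0)$ by the everywhere-positive multiplier $\cosh y$; the only point deserving a line of care is the distributional integration by parts in $x$ for the $\partial_x^{-1}u_{yy}$ term, and this is already covered by \cite[Lemma 2.1]{mammeri} and the proof of Theorem \ref{UCP}.
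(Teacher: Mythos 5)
Your proposal is correct and follows essentially the same route as the paper: the paper's proof simply repeats the Theorem \ref{UCP} argument with the weight $G(x,y)=e^{x+y}$, i.e. a positive solution of $g''=g$ forced by the sign $\gamma=-1$, while you take $g(y)=\cosh y$, an inessential variation of the same choice. Your observation that the sign flip removes any restriction on the width of the $y$-support matches the paper's statement of the corollary.
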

\begin{proof}
    The proof is similar to the KP-BBM-II proof by taking the function $G(x,y)=e^{x+y}$.
\end{proof}

\begin{remark}\label{remark1}
    The unique continuation property remains valid if we consider the case $p = 3$, or more generally any odd integer $p$. However, a significant challenge arises with respect to the well-posedness of the problem. In this context, we only have established global solutions for cases $p = 1$ and $p = 2$. For higher odd values of $p$, particularly beyond $p = 2$, the existence and uniqueness of global solutions become much more complex and are not as well understood. This difficulty stems from the lack of regularity and control in the higher-order nonlinear terms, which complicates the analysis and often leads to issues such as blow-up or nonexistence of solutions under standard assumptions. Consequently, the study of well-posedness for these higher-order cases remains an open problem, requiring more refined techniques and potentially novel approaches to address the challenges posed by these nonlinearities.
\end{remark}

\section{Global Uniform Stabilization}\label{stabilizationsection}

In this section, we study the long-term behavior of solutions to \eqref{e1}. Our goal is to demonstrate that the energy associated with the BBM-KP equations decays exponentially. First, let us prove Proposition \ref{th:MUes} concerning the local exponential stability of the system \eqref{e1}.

\begin{proposition}[Local Uniform Stabilization]\label{th:MUes}
Let $s > 3/2$, $R>0$ and $a\in H^{s}(\R^2)$ satisfy Assumption \ref{hyp a}. Suppose that $u$ is the solution of system \eqref{e1} with initial data $u_0 \in X^s \cap W_1(\R^2)$ with $$ | u_0 | _Xs \cap W_1\leq R$$ with $p=1$. Then, there exists $C=C(R)>0$ and $\nu=\nu(R)>0$ such that the energy $E(t)$, defined in \eqref{energy} decays exponentially as $t$ tends to infinity, i.e., 
\begin{equation}
E(u(t)) \leq C e^{-\nu t} E(u_0), \quad \forall t \geq 0.
\end{equation}
\end{proposition}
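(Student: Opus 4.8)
The plan is to use the classical compactness--uniqueness argument, reducing the exponential decay to an \emph{observability inequality}
\begin{equation*}
E(u(0)) \leq C \int_0^T \int_{\R^2} a(x,y)\, u^2 \, dx\,dy\, dt
\end{equation*}
for solutions with data bounded by $R$ in $X^s\cap W_1(\R^2)$. Granting this inequality, the dissipation identity \eqref{diss-energy_1} gives $E(u(T)) = E(u(0)) - \int_0^T\!\int_{\R^2} a u^2 \leq (1 - \tfrac{1}{C})E(u(0))$, and iterating over the intervals $[kT,(k+1)T]$ (using that the system is autonomous and that $\|u(kT)\|_{X^s\cap W_1}$ stays bounded by the a priori estimate \eqref{e160}) yields $E(u(t)) \leq C' e^{-\nu t} E(u(0))$ with $\nu = -\tfrac1T\log(1-\tfrac1C)$. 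So the real content is the observability inequality.

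\textbf{Proof of observability by contradiction.} Suppose it fails. Then there is a sequence of solutions $u_n$ with $\|u_{0,n}\|_{X^s\cap W_1}\leq R$ such that
\begin{equation*}
\lambda_n := E(u_n(0)) \geq n \int_0^T\!\int_{\R^2} a\, u_n^2 \, dx\,dy\,dt.
\end{equation*}
Write $\alpha_n^2 := \|u_{0,n}\|_{H^1_x}^2 = 2E(u_n(0))$, which is bounded (by $2R^2$) and bounded below away from $0$ if the inequality is to fail nontrivially; after passing to a subsequence $\alpha_n \to \alpha \geq 0$. One distinguishes the two cases $\alpha>0$ and $\alpha = 0$. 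In the case $\alpha>0$: from the a priori bound the sequence $u_n$ is bounded in $C(0,T;X^s)$, hence (by the equation) $\partial_t u_n$ is bounded in $C(0,T;H^{s-2})$; by Aubin--Lions $u_n$ is relatively compact in $C(0,T;H^{s'}_{loc})$ for $s'<s$, so a subsequence converges to some $u$ which is a solution of \eqref{e1} on $[0,T]$ with $E(u(0))>0$, and passing to the limit in the defect relation gives $\int_0^T\!\int_{\R^2} a\, u^2 = 0$. By \eqref{hyp a} this forces $u \equiv 0$ on $\R^2\setminus\omega$, hence (since $\omega$ is bounded in $x$ and has $y$-width $<\pi$) $u(t)$ has support in $\overline{\omega}\subset[-B,B]\times[C,D]$ for all $t$, and, the damping being inactive there, $u$ solves the damping-free BBM-KP equation. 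Theorem \ref{UCP} (for $\gamma=1$) or Corollary \ref{UCP2} (for $\gamma=-1$) then forces $u\equiv 0$ — but one must still contradict $E(u(0))>0$; this is done by showing the convergence $u_n\to u$ is strong in $H^1_x$ at $t=0$, which follows from the weak convergence together with convergence of norms obtained via the dissipation identity and the strong local convergence. This is the step I expect to be the main obstacle: upgrading local/weak compactness to the strong $H^1_x$-convergence needed to keep the energy from escaping to spatial infinity, because $X^s$-boundedness does not by itself prevent mass from leaking to $|x|+|y|\to\infty$; one likely needs the $W_1$-regularity (control of $\partial_x^{-1}u_y$) and the structure of the $H^1_x$ norm, together with the fact that $a\geq\lambda_0$ off the bounded set $\omega$, to localize the energy.

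\textbf{The vanishing case.} When $\alpha_n\to 0$ one rescales $v_n := u_n/\alpha_n$, so $\|v_n(0)\|_{H^1_x}=\sqrt2$ while $\int_0^T\!\int a\, v_n^2 \to 0$; the nonlinear term $u_n^p u_{n,x} = \alpha_n^p v_n^p v_{n,x}$ is of higher order in $\alpha_n$, so any weak limit $v$ solves the \emph{linear} damped equation $v_t + v_x - v_{xxt} + \gamma\partial_x^{-1}v_{yy} + a v = 0$ with $\int_0^T\!\int a\, v^2 = 0$, hence $av\equiv 0$, hence $v$ is supported in $\overline\omega$ and solves the linear BBM-KP equation there; the same unique continuation argument (the proof of Theorem \ref{UCP} only used $p=1$ in the very last line, and the linear case is even simpler) gives $v\equiv 0$, contradicting $\|v(0)\|_{H^1_x}=\sqrt2$ once strong convergence at $t=0$ is established as above. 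Combining both cases proves the observability inequality, and hence the proposition. Finally, I would remark that the constants $C$ and $\nu$ produced this way depend on $R$ through the a priori estimate, consistent with the \emph{local} uniform statement; the passage to the global statement of Theorem \ref{globalexp} is handled separately.
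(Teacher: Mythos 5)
Your overall strategy (compactness--uniqueness reduction to an observability inequality, then the UCP of Theorem \ref{UCP} / Corollary \ref{UCP2}) is the same as the paper's, but the specific form you chose for the observability inequality creates a gap that you yourself flag and do not close. You propose to prove $E(u(0))\le C\int_0^T\!\int_{\R^2} a\,u^2\,dx\,dy\,dt$ and, in the contradiction argument, to normalize by the initial energy; to get a contradiction you must then show that the weak / locally-strong limit is \emph{nonzero}, i.e.\ that no $H^1_x$-mass escapes to spatial infinity, and you explicitly leave this ``main obstacle'' unresolved. The paper avoids the problem by a different choice of observed quantity and normalization: it proves the \emph{local} inequality \eqref{observabiblity}, $\int_0^T\!\int_\omega(|u|^2+|u_x|^2)\le C\int_0^T\!\int_{\R^2} a\,u^2$, and in the contradiction argument sets $\lambda_n=\|u_n\|_{L^2(0,T;H^1_x(\omega))}$, $v_n=u_n/\lambda_n$, where $\omega=(-B,B)\times(C,D)$ is \emph{bounded}. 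Then Aubin--Lions/Simon compactness on the compact set $\overline\omega$ (using the $X^s$, $H^{s-2}$ bounds on $v_n$, $\partial_t v_n$) gives strong convergence in $L^2(0,T;H^1_x(\omega))$, so the limit automatically satisfies $\|v\|_{L^2(0,T;H^1_x(\omega))}=1$ and is nonzero --- no strong convergence in $H^1_x(\R^2)$ and no concentration/vanishing analysis are needed. The weaker local observability still closes the decay estimate because off $\omega$ the damping is coercive ($a\ge\lambda_0$ by \eqref{hyp a}), which together with the identity \eqref{e152}, the multiplier $(T-t)u$ and the monotonicity of the energy yields $\|u(T)\|_{H^1_x}^2\le\delta\|u_0\|_{H^1_x}^2$ with $\delta<1$, and then the iteration over $[kT,(k+1)T]$ proceeds as you describe. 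So the missing idea is precisely the normalization by the local norm over the bounded region $\omega$, which is what makes the compactness--uniqueness argument close.

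A secondary problem is your vanishing case: after rescaling by $\alpha_n\to0$ you arrive at the \emph{linear} undamped BBM-KP equation and assert that its UCP is ``even simpler.'' It is not obtainable by the argument of Theorem \ref{UCP}: that proof eliminates the linear term by choosing $g+g''=0$ and then concludes only because the surviving term carries $u^{p+1}$ with $p+1$ even and a nonzero coefficient, hence a sign; for the linear equation the same computation yields no information about $u$ itself, so this step of your proposal is unsupported. In the paper's setup the analogue of this degeneracy is the possibility $\lambda_n\to\lambda$ with the limit equation \eqref{solutionwhole} carrying the coefficient $\lambda^2$, but the contradiction there is drawn from $\|v\|_{L^2(0,T;H^1_x(\omega))}=1$ rather than from a nonvanishing initial energy, which is exactly the structural advantage of the local normalization.
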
			

First, note that the corresponding solution $u$ of (\ref{e1}) satisfies the following estimate
\begin{equation}\label{e152}
\|u(t)\|_{H^1_x(\R^2)}^{\color{red}2} +2\int_0^t\int_{\R^2}a(x,y)|u(x,y, \tau)|^2dxdyd\tau = \|u_0\|_{H^1_x(\R^2)}^2.
\end{equation}
On the other hand, multiplying the equation in (\ref{e1}) by $(T-t)u$ and integrating on $\R^2\times [0,T]$, we obtain
\begin{equation}\label{e153}
T \|u_0\|_{H^1_x(\R^2)}^2 = \frac{1}{2}\|u\|_{L^2(0,T;H^1_x(\R^2))}^2+\int_0^T\int_{\R^2}(T-t)a(x,y)|u(x,y,t)|^2dxdydt.
\end{equation}
\begin{claim}
For any $T > 0$ and $R> 0$ there exist $C = C(R, T )$ such that the following
 estimate holds for any weak solution $u$ of \eqref{e1} with $\|u_0\|_{X^s \cap W_1(\R^2)}\leq R$ such that
\begin{equation}\label{observabiblity_1}\color{black}
\|u_0\|_{H^1_x(\R^2)}^2  \leq C \int_0^T \int_{\R^2}a(x,y)|u(x,y,t)|^2dxdydt
\end{equation}
\end{claim}
\begin{proof}
In order to prove the observability inequality \eqref{observabiblity_1}, \textcolor{black}{we will use} some compactness-uniqueness argument, it means that we argue by contradiction to suppose that (\ref{observabiblity_1}) does not hold. Hence, there exists a sequence  of initial data $\{u_{0,n}\}$ belong to $X^s \cap W_1(\R^2)\subset X^s \subset H^1_x(\R^2), s>3/2$, where the corresponding solutions $\{u_n\}$ satisfies 
\begin{equation}\label{boundedu0}
\|u_{0,n}\|_{H^1_x}^2\leq   \|u_{0,n}\|_{X^s} \leq  R
\end{equation}
and
\begin{equation}\label{e157_1}\color{black}
\lim_{n \rightarrow \infty}\frac{\int_0^T\int_{\R^2}a(x,y)|u_n|^2dxdydt}{ \|u_{0,n}\|_{H^1_x(\R^2)}^2}=0.
\end{equation}
Let $\{\lambda_n\}$  and $\{v_n\}$  be sequences defined respectively by
\begin{equation*}\color{black}
\alpha_n =\|u_{0,n}\|_{H^1_x(\R^2)} \quad \text{and} \quad v_n(x,y,t) = \frac{1}{\alpha_n} u_{n}(x,y,t).
\end{equation*}
Thus, the sequence $\{\alpha_n\}$ is bounded, there exists a subsequence (denoting by $n$) such that $\alpha_n \rightarrow \alpha$, noting that $\alpha_n>0$ for all $n$, by \eqref{e157_1}. Moreover,  
\begin{equation}\label{enorm1_1}\color{black}
\|v_{0,n}\|_{H^1_x}=1, \quad \forall  n\in \N.
\end{equation}
We observe that, for all $n \in  \N$ the function $v_n$ is the mild solution of the following problem 
\begin{equation*}\label{e159}
\begin{cases}
\partial_t v_n + \partial_x v_n -\partial_t\partial_x^2v_n+\alpha_n v_n\partial_xv_n+\textcolor{red}{\gamma} \partial_x^{-1}\partial_y^2 v_n+a(x,y)v_n =0 \\
v_n(x,0)=\frac{u_n(x,0)}{\alpha_n}.
\end{cases}
\end{equation*}
for $(x,y) \in \R^2$. Moreover, from \eqref{e157_1}, it follows that 
\begin{align}\label{e158}
  \lim_{n \rightarrow \infty}\int_0^T\int_{\R^2}a(x,y)|v_n|^2dxdydt =0.
\end{align} 
\textcolor{black}{Clearly, we have that $\{v_n(\cdot,\cdot,0)\}_{n\in \N}$ is bounded in $H^1_x(\R^2)$.}   Combining (\ref{e158}) and (\ref{e152}) we conclude that $\{v_n\}$ is bounded in $L^{\infty}(0,T;H^1_x(\R^2))$. \textcolor{black}{On the other hand}, from the well-posedness Theorem \ref{Globalwellposedness} we already know that the solution depends continuously in these function classes on variations of initial data in $X^s \cap W_1(\R^2)$, thus we have that 
 \begin{equation}\label{limitacionesbuenas}
 \begin{cases}
      \|v_n\|_{L^{\infty}(0,T;X^s)} \leq C \|v_n(0)\|_{X^s\cap W_1(\R^2)} \\
      \|v_{n,t}\|_{L^{\infty}(0,T;H^{s-2})} \leq C \|v_n(0)\|_{X^s\cap W_1(\R^2}  \\
       \|\partial_x^{-1} \partial_y v_{n}\|_{L^{\infty}(0,T;H^{s-1})} \leq C \|v_n(0)\|_{X^s\cap W_1(\R^2} 
 \end{cases}
     \end{equation}
 \textcolor{black}{Thus, we deduce that }
 \begin{align}
    &\text{ $\{v_n\}$ is bounded in  $L^{\infty}(0,T;X^s)$ }\\
    &\text{ $\{v_{n_t}\}$ is bounded in  $L^{\infty}(0,T;H^{s-2}(\R^2))$ } \\
    &\text{ $\{\partial_{x}^{-1}\partial_y^2 v_{n}\}$ is bounded in  $L^{\infty}(0,T;H^{s-2}(\R^2))$ }.
 \end{align}

 Let $K$ be a compact subset of $\mathbb{R}^{2}$ and let's denote by $X^{s}_{K}$ the subspace of $X^{s}$ consisting of the functions with support inside $K$. We will use the symbol $\overset{c}{\hookrightarrow}$ to denote a compact embedding. From the Rellich's Theorem and taking into account that $s>3/2$ we have that $$X^s_{K} \hookrightarrow  H^{s}_{K} \overset{c}{\hookrightarrow} H^{1} \hookrightarrow H^{1}_x \hookrightarrow H^{s-2},$$
 so we can apply  \cite[Corollary 4, page 85]{simon1986compact} to conclude that $\{v_n\}$ is relative compact in $L^{2}(0,T;H^1_x(K))$. Consequently, there exists a subsequence, still denoted by  $\{v_n\}$, such that
\begin{equation}\label{e50}
\text{$v_{n} \longrightarrow v$ in $L^{2}(0,T;H^1_{x}(K))$ strongly for any compact $K\subset \R^2$ and a.e.}
\end{equation} 
 Now, extracting a subsequence if needed, we have
\begin{align}
&v_{n} \rightarrow v  \quad \text{in $L^{2}([0,T];L^2_{loc}(\R^2))$ strongly}, \quad \label{limites_1} \\
&v_{n,x} \rightarrow v_x  \quad \text{in $L^{2}([0,T];L^2_{loc}(\R^2))$ strongly}, \label{limites_2} \\
&\partial_x^{-1}\partial_{y}^2  v_n \rightharpoonup f  \quad \text{in $L^{2}([0,T];H^{s-2}(\R^2))$ weak},  \label{limites_3}
\end{align}
as $n \rightarrow \infty$. Furthermore, we affirm that $f= \partial_x^{-1}\partial_y^2 v.$ In fact, consider $\partial_x^{-1}\partial_y^2 v_n=g_n$, thus $\partial_y^2 v_n=\partial_xg_n$ and from \eqref{limites_3}, we have that $\partial_y^2 v_n \rightarrow \partial_y^2 v$ in $\mathcal{D}'$ and $\partial_xg_n = \partial_x\partial_x^{-1}\partial_y^2 v_n \rightarrow \partial_x f $ in $\mathcal{D}'$, which implies that $\partial_x f =  \partial_y^2 v$. Moreover, note that \eqref{e50} implies that limit in \eqref{e158} satisfies 
\begin{align}\label{suportofv}
 \int_0^T\int_{\R^2}a(x,y)|v|^2dxdydt =0 \quad \Rightarrow \quad \text{$v \equiv 0$ on $[0,T]\times \R^2\setminus\omega$}
\end{align}
\textcolor{black}{and from \eqref{enorm1_1}, we obtain 
\begin{equation}\label{limitnorm1}
    \|v_0\|_{H^1_x(\R^2)}=1.
\end{equation}}
Thus from \eqref{limites_1}  and \eqref{limites_2}, we deduce that 
\begin{equation}\label{limitecase1}
    v_n v_{n,x} \rightarrow vv_x  \quad \text{in $L^{2}([0,T];L^2_{loc}(\R^2))$ strongly}, 
\end{equation}
Let us consider $\varphi$ be an arbitrary function belongs to $C^{\infty}([0,T]\times \R^2)$. Thus,  for any $k$ and considering the distributional derivative, it follows that
$$\color{black}
\int_0^T \int_{\mathbb{R}^2}\left(v_k \varphi_t-v_k \varphi_{xxt}- v_k\varphi_x+\alpha_kv_k v_{k,x} \varphi+\gamma\partial_x^{-1}v_{k,yy} \varphi+a v_k \varphi\right) d x d t=0, \quad \gamma=\pm 1.
$$
Passing to the limit as $k \rightarrow+\infty$ and taking \eqref{limites_1}, \eqref{limites_2}, \eqref{limites_3} \eqref{suportofv} and \eqref{limitecase1}, into account, we obtain the following equality
$$\color{black}
\int_0^T \int_{\mathbb{R}^2}\left(v \varphi_t-v \varphi_{xxt}-v \varphi_x+\alpha v v_{x} \varphi+\gamma\partial_x^{-1}v_{yy} \varphi\right) d x d t=0,  \quad \gamma=\pm 1
$$
Thus, $v$ solves the equation
\begin{equation}\label{solutionwhole_1}
\begin{cases}\color{black}
v_t+v_x-v_{xxt}+\alpha vv_x+\gamma\partial_x^{-1}v_{yy}=0 \quad & \color{black}\text{in $\mathcal{E}'([0,T]\times \R^2)$},\\
\color{black} \supp v(t) \subset \omega & \color{black} \hbox{for all } t \in [0,T].
\end{cases}
\end{equation}
 Therefore, by the unique continuation property established in
Theorem \ref{UCP}, we conclude that \(v \equiv 0\) in \(\R^2 \times(0, T)\) which contradicts \eqref{limitnorm1}. Thus \eqref{observabiblity_1} holds.
\end{proof}
\color{black}
\begin{proof}[\textbf{Proof of Proposition \ref{th:MUes}}]
Note that (\ref{e152}) implies that 
\begin{equation}
\|u(t)\|_{H^1_x(\R^2)}^2  - \|u_0\|_{H^1_x(\R^2)}^2=-2\int_0^t\int_{\R^2}a(x,y)|u(x,y, \tau)|^2dxdyd\tau \quad 0\leq t\leq T. 
\end{equation}
From observability inequality \eqref{observabiblity_1}, it follows that 
\begin{equation*}
-2\int_{0}^{T} \int_{\mathbb{R}^{2}}a(x)|u(x,y,t)|^2dxdydt \leq -\frac{2}{C}\|u_0\|_{H^1_x(\R^2)}^2, \quad C=C(R,T).
\end{equation*}
Since the energy is dissipative, it follows that
\begin{equation*}
\|u(T)\|_{H^1_x(\R^2)}^2  - \|u_0\|_{H^1_x(\R^2)}^2 \leq -\frac{2}{C}\|u(T)\|_{H_{x}^{1}(\mathbb{R}^{2})}^2,
\end{equation*}
so we have
\begin{equation*}
\|u(T)\|_{H^1_x(\R^2)}^2 \leq \delta \|u_0\|_{H^1_x(\R^2)}^2, \quad \text{with $0< \delta <1$ and $\delta=\delta(R,T)=\frac{C}{C+2}$} 
\end{equation*}
Consequently,
\begin{equation*}
\|u(kT)\|_{H^1_x(\R^2)}^2 \leq \delta^k \|u_0\|_{H^1_x(\R^2)}^2, \quad \forall k\geq 0.
\end{equation*}\color{black}
Finally, fixing $T>0$, for any $t \geq 0$, there exists $k>0$, such that $kT \leq t < (k+1)T$. Thus,
\begin{align*}
\|u(t)\| _{H^1_x(\R^2)}^2 &\leq \|u(kT)\|_{H^1_x(\R^2)}^2 \leq \delta^k \|u_0\|_{H^1_x(\R^2)}^2 \\
& \leq \delta^{\frac{t}{T}}\delta^{-1} \|u_0\|_{H^1_x(\R^2)}^2 \\
&\leq \delta^{-1}\|u_0\|_{H^1_x(\R^2)}^2e^{-\nu t},
\end{align*}
where $\nu=\nu(R,T) = -\frac{\ln \delta}{T} > 0$.
\end{proof}

\color{black}

The next result asserts that the system (\ref{e1}) is globally uniformly exponentially stable in $H^1_x(\R^2)$. This means that the constant $\nu$  in Proposition \ref{th:MUes} is independent of $R$, when $\|u_0\|_{X^s} \leq R$.

\begin{proof}[\textbf{Proof of Theorem \ref{globalexp}}]
The theorem is a direct consequence of Proposition \ref{th:MUes}, as the decay \(\nu\) can be taken
as the decay for \(R=1\) (the decay rate is given by the behavior of the solutions in
a neighborhood of the origin, since all trajectories enter into this neighborhood). In fact, by Proposition \ref{th:MUes}, there exist $\nu '= \nu'(\varepsilon)>0$ and $C=C(\varepsilon)>0,$ such that
\begin{equation*}
\|u(t)\|_{H^1_x(\R^2)} \leq C\|u(\varepsilon)\|_{H^1_x(\R^2)}e^{-\nu' t}, \quad t\geq \varepsilon.
\end{equation*}
If $\|u_0\|_{X^s}\leq R$, again, by Proposition \ref{th:MUes} there exist $C_R >0$ and $\nu_R >0$, satisfying
\begin{equation*}
\|u(t)\|_{H^1_x(\R^2)} \leq C_R\|u_0\|_{H^1_x(\R^2)}e^{-\nu_R t}, \quad t\geq 0.
\end{equation*}
Thus, for all $t\geq \varepsilon$, we have
\begin{align*}
\|u(t)\|_{H^1_x(\R^2)} &\leq C\|u(\varepsilon)\|_{H^1_x(\R^2)}e^{-\nu' t} \\
& \leq CC_{\textcolor{black}{R}}\|u_0\|_{H^1_x(\R^2)}e^{-\nu_R \varepsilon}e^{-\nu' t} \\
& \leq \alpha_\varepsilon(\|u_0\|_{H^1_x(\R^2)})e^{-\nu' t},
\end{align*}
where $\alpha_{\varepsilon}(s)=CC_{\textcolor{black}{R}}e^{-\nu_R \varepsilon} s$.
\end{proof}

\section{Numerical approximation of the BBM-KP equation}\label{simulations}

In this section, we introduce a numerical scheme to approximate solutions of the BBM-KP equation
\begin{equation}\label{BBMKP2}
u_t+ u_x + \alpha u^p u_x -u_{xxt}+ \gamma \partial_x^{-1} u_{yy} + a(x,y)u=0,
\end{equation}
considered in this paper with $\alpha=1$. The proposed scheme combines a spectral approach for the spatial variables with a second-order two-step finite difference method for the temporal variable. The numerical solver is validated using the exact solution in the linear regime and some periodic traveling wave solutions in the non-linear case. Although our implementation is specifically demonstrated for $p=1$, the numerical solver can be adapted to approximate the solutions of the BBM-KP equation for any positive integer $p$.

\subsection{The numerical scheme}

To obtain numerical solutions for equation \eqref{BBMKP2}, the unbounded domain $\mathbb{R}^2$ is approximated by a rectangle $[-L_x/2, L_x/2] \times [-L_y/2, L_y/2]$ with sufficiently large dimensions $L_x$ and $L_y$ to maintain the periodicity assumptions. This rectangular domain is then discretized into $N_x \times N_y$ equidistant points, where $N_x$ and $N_y$ are even integers.

Consider a solution $u(.,., t) \in L_{per}^2([-L_x/2, L_x/2] \times [-L_y/2, L_y/2] )$ to equation \eqref{BBMKP2} with $p =1$. For a fixed time $t$, the 2-dimensional Fourier transform of $u(.,., t)$ with respect to the spatial variables $x,y$, is defined as
\[
\hat{u}(m, n, t) := \frac{1}{L_x L_y} \int_{-L_y/2}^{L_y/2} \int_{-L_x/2}^{L_x/2} u(x,y,t) e^{- 2 \pi i( \frac{m x}{L_x} + \frac{n y}{L_y} ) }dx dy, ~~ (m, n) \in \mathbb{Z}^2.
\]
Therefore, 
\begin{equation}\label{uapprox}
u(x, y, t) =  \sum_{m \in \mathbb{Z} } \sum_{n \in \mathbb{Z} } \hat{u}(m, n, t) e^{ 2\pi i ( \frac{m x}{L_x} + \frac{n y}{L_y} )}, ~~ (x, y) \in \mathbb{R}^2.
\end{equation}
Let $\xi_m = \frac{2 \pi m}{L_x}$, $\eta_n = \frac{2 \pi n}{L_y}$. By substituting the expression for $u$ given by \eqref{uapprox} into the linear terms of the BBM-KP equation \eqref{BBMKP2} (i.e. excluding the nonlinear term $u^2 /2$), and then performing inner product on the space $L_{per}^2([-L_x/2, L_x/2] \times [-L_y/2, L_y/2] )$ with the periodic orthogonal basis functions, $\phi_{m,n}(x,y) = e^{ 2\pi i ( \frac{m x}{L_x} + \frac{n y}{L_y} ) }$, where $m,n \in \mathbb{Z}$,
we find that the Fourier coefficient of $\hat{u}(m,n)$ satisfy
\[
i \xi_m \hat{u}_t - (i \xi_m)^3 \hat{u}_t + \alpha (i \xi_m)^2 \widehat{ \frac{u^2}{2} } + (i \xi_m)^2 \hat{u} + \gamma (i \eta_n)^2 \hat{u} + i \xi_m \widehat{a u} = 0, ~~  m, n \in \mathbb{Z},
\]
o equivalently,
\begin{align}
   \hat{u}_t = -i \frac{ \xi_m^2 + \gamma \eta_n^2}{ \xi_m (1 + \xi_m^2)} \hat{u} -i \frac{ \alpha \xi_m \widehat{ \frac{u^2}{2}} - i \widehat{a u} }{1 + \xi_m^2}, ~~ m, n \in \mathbb{Z}.
\end{align}
Equation above is approximated by the second-order two-step numerical finite difference scheme
\begin{align}\label{two_step_scheme}
   \hat{u}^{(k+1)} = H_1(\Delta t, \xi_m, \eta_n) \hat{u}^{(k-1)} + W_1(\Delta t, \xi_m, \eta_n) \Big(  \alpha \xi_m \widehat{ \frac{ (u^{(k)})^2 }{2} } - i \widehat{a u^{(k)}} \Big), ~~~ k = 0,1,2,...,
\end{align}
where $\Delta t$ is the time step and
\[
    H_1(\Delta t, \xi_m, \eta_n) := \frac{1 - \frac{i \Delta t( \xi_m^2 + \gamma \eta_n^2)}{\xi_m (1 + \xi_m^2)}  }{1 +\frac{i \Delta t( \xi_m^2 + \gamma \eta_n^2)}{\xi_m (1 + \xi_m^2)}},
\]
and
\[
   W_1(\Delta t, \xi_m, \eta_n) := - \frac{ \frac{2 i \Delta t}{1 + \xi_m^2} }{1 +\frac{i \Delta t( \xi_m^2 + \gamma \eta_n^2)}{\xi_m (1 + \xi_m^2)}  }.
\]
Here the notation $u^{(k)}$ means the approximation of the function $u$ at time $t = k \Delta t$.

The two-step scheme in \eqref{two_step_scheme} can be initialized, for example, by employing the one-step scheme
\begin{align}
   \hat{u}^{(k+1)} = H_2(\Delta t, \xi_m, \eta_n ) \hat{u}^{(k)} + W_2(\Delta t, \xi_m, \eta_n ) \Big(  \xi_m \widehat{ \frac{ (u^{(k)})^2 }{2} } - i \widehat{a u^{(k)}} \Big), ~~~ k=0,1,....,
\end{align}
with
\[
    H_2(\Delta t, \xi_m, \eta_n) := \frac{1 - \frac{i \Delta t( \xi_m^2 + \gamma \eta_n^2)}{2 \xi_m (1 + \xi_m^2)}  }{1 +\frac{i \Delta t( \xi_m^2 + \gamma \eta_n^2)}{2 \xi_m (1 + \xi_m^2)}},
\]
and
\[
   W_2(\Delta t, \xi_m, \eta_n ) := - \frac{ \frac{ i \Delta t}{1 + \xi_m^2} }{1 +\frac{i \Delta t( \xi_m^2 + \gamma \eta_n^2)}{2 \xi_m (1 + \xi_m^2)}  }.
\]
In the previous numerical schemes, the Fourier coefficients of the solution $u$ are approximated using the Fast Fourier Transform (FFT) algorithm, applied to the equidistant mesh in the rectangle $[-L_x/2, L_x/2] \times [-L_y/2, L_y/2]$. The series in equation \eqref{uapprox} is then \textcolor{black}{approximated by the truncated} series
\begin{equation}\label{uapprox2}
u(x, y, t) \approx  \sum_{m = - \frac{N_x}{2} }^{ \frac{N_x}{2} } \sum_{n= - \frac{Ny}{2} }^{ \frac{Ny}{2} } \hat{u}(m, n, t) e^{ 2\pi i ( \frac{m x}{L_x} + \frac{n y}{L_y} )}, ~~ (x, y) \in \mathbb{R}^2.
\end{equation}
To avoid \textcolor{black}{aliasing} from being applied in the calculation of the product terms $\widehat{\frac{u^2}{2}}$ and $\widehat{a u}$, we use the so-called 3/2th rule. This method involves extending $N$ Fourier coefficients by adding $\frac{3}{2} N$ zeros (a technique known as zero padding), \textcolor{black}{and then selecting the relevant anti-aliasing} Fourier coefficients to ensure an accurate resolution of these product terms.

\subsection{Some exact solutions of the BBM-KP equation}

In this section, we present some exact solutions to the BBM-KP equation \eqref{e1} \textcolor{black}{that are useful to test} the proposed numerical solver.

Consider the initial value problem associated to the linearized BBM-KP equation (i.e. equation \eqref{BBMKP2} with $\alpha = 0$):
\begin{align}\label{linearBBMKP}
\begin{cases}
&u_t+ u_x - u_{xxt} + \gamma \partial_x^{-1} u_{yy} + a u=0, (x,y)\in \R^2,\ t>0, \\
& u(x,y,0) = u_0(x,y),
\end{cases}
\end{align}
where $a$ is a real constant. By using the 2d-Fourier transform, we 
obtain the exact solution of the problem above given by
\begin{equation}\label{exact_sol}
   u(x,y,t) =  \sum_{m \in \mathbb{Z} } \sum_{n \in \mathbb{Z} } \hat{u}_0(m, n) e^{-i t \frac{ \xi_m^2 + \gamma \eta_n^2 - i a \xi_m}{ \xi_m (1 + \xi_m^2 ) }   } e^{ 2\pi i ( \frac{m x}{L_x} + \frac{n y}{L_y} )}, ~~ (x, y) \in \mathbb{R}^2.
\end{equation}
Observe that the exact solution \eqref{exact_sol} includes a time-dependent factor $e^{-at /(1 + \xi_m^2) }$, which decays exponentially as $t \to \infty$ for $a>0$, \textcolor{black}{according to} the theory presented in the previous sections. Consequently, the coefficient $a$ effectively \textcolor{black}{promotes} has a damping effect on the solutions of the linear BBM-KP equation \eqref{linearBBMKP}.

However, in the absence of a damping effect (i.e. $a=0$),
periodic traveling wave solutions (i.e. solutions with a permanent shape) to the full BBM-KP equation \eqref{BBMKP2} exist. One family of such traveling wave solutions, with speed $c$, is given by
\begin{equation}\label{periodic_wave_BBMKP}
    u(x,y,t) = r_3 -(r_3 -r_2) \text{sn}^2 \Big( \sqrt{ - \frac{ \tilde{\alpha}(r_3 - r1)}{6 c} } (x + ry - ct ); m \Big),
\end{equation}
with $\tilde{\alpha} = -\frac12 \alpha$, which was derived in \cite{ouyang2014} under the assumptions $c/\tilde{\alpha}<0$, $(1 + \gamma r^2 + c)/c < 0$. Furthermore, $r_1< r_2 < r_3$ are the three real zeros of the polynomial
\[
p(\phi) = \frac{2 \tilde{\alpha} }{3 c} \phi^3 - \frac{1 + \gamma r^2 - c}{c}\phi^2 + 2 h,
\]
with the parameter $h$ selected such that $h_1 < h < h_2 = 0$, where
\[
h_1 = \frac{(1 + \gamma r^2 -c)^3}{6 \tilde{\alpha}^2 c} < 0.
\]
Furthermore, $m = (r_3 - r_2)/(r_3 - r_1)$. Here, $\text{sn}(l; m)$ denotes the Jacobian elliptic function $\text{sn}$ with modulus $m$.

\subsection{Numerical experiments}

In this section, we illustrate the evolution of the exact solutions to the BBM-KP equation \eqref{e1} presented in the previous section, using the numerical solver described in \eqref{two_step_scheme} for some values of the modeling parameters. For all cases where it is not explicitly stated otherwise, we use $\alpha = 1$.

First, Figures \ref{lineargammapositive} and \ref{lineargammanegative} present the simulation results for two different values of the parameter $\gamma$:  $\gamma=1$ and $\gamma=-1$, respectively. The initial condition used for these simulations is given by the Gaussian function
\begin{equation}\label{Gaussian_pulse}
u(x,y,0)= u_0(x,y) = 0.5 e^{-\sigma (x^2 + y^2) },
\end{equation}
with $\sigma =4$. For both values of $\gamma$, the cross-sectional profile $u(x,0, t=2)$ is displayed at time $t=2$. In these simulations, damping is applied with a constant damping value $a = 1$. We observe a good agreement between exact and numerical solutions in both scenarios, indicating the accuracy and reliability of the numerical method employed. The numerical parameters used are $\Delta t = 0.001$, $\Delta x = \Delta y = 600/2^{12} \approx 0.1465$.

\begin{figure}[ht]
\centering
 \includegraphics[width=0.7\textwidth, height=0.4\textwidth ]{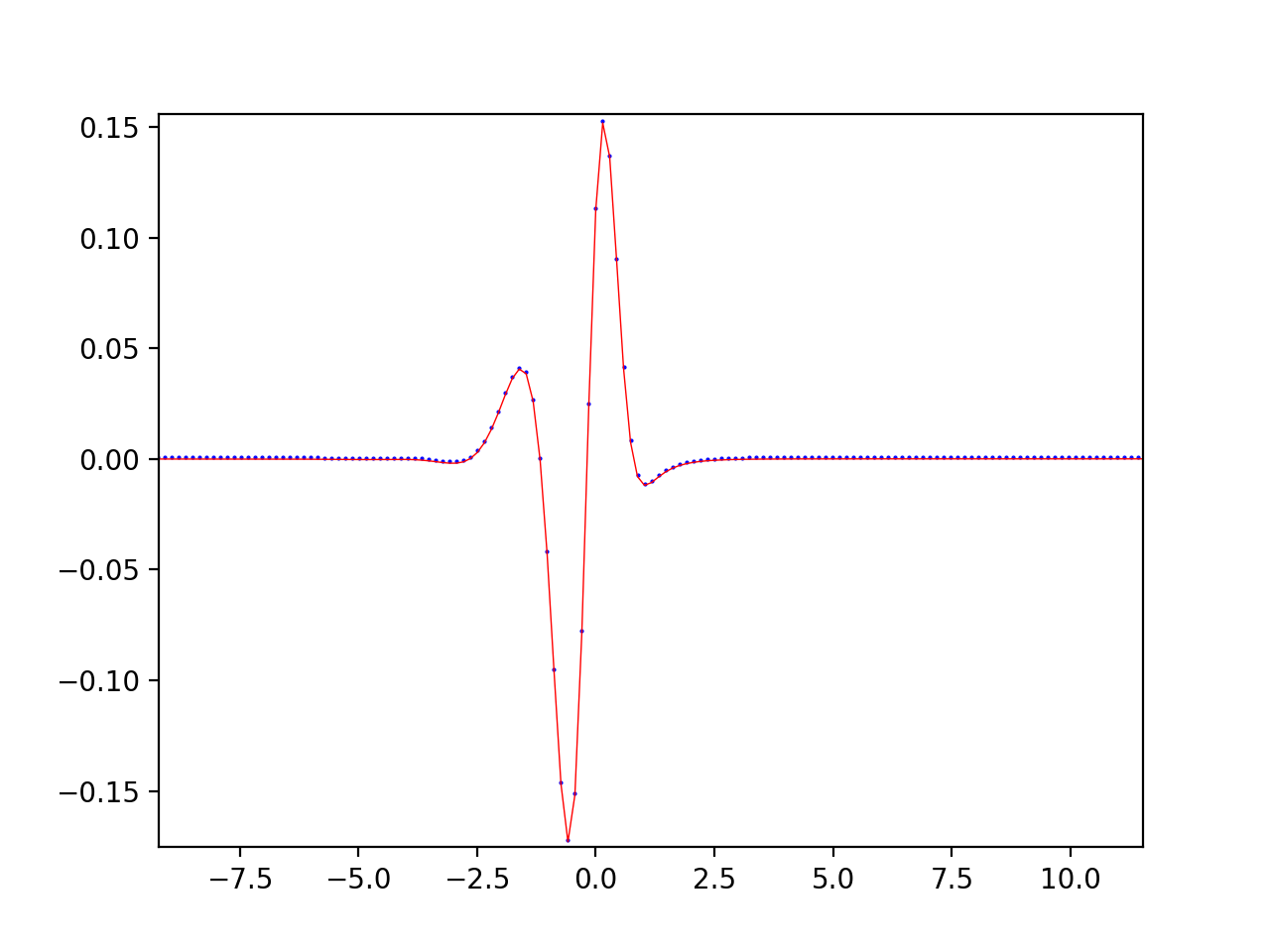}
\caption{The cross-sectional profile $u(x,0,t=2)$ of the exact solution \eqref{exact_sol}, in the linear regime ($\alpha =0$), compared with the numerical solution obtained using the scheme given in \eqref{two_step_scheme}. The modeling parameters are $\gamma = 1$,  $a = 1$. The numerical solution is shown as the pointed line, while the exact solution is displayed by the solid line. }
\label{lineargammapositive}
\end{figure}

\begin{figure}[ht]
\centering
 \includegraphics[width=0.7\textwidth, height=0.4\textwidth ]{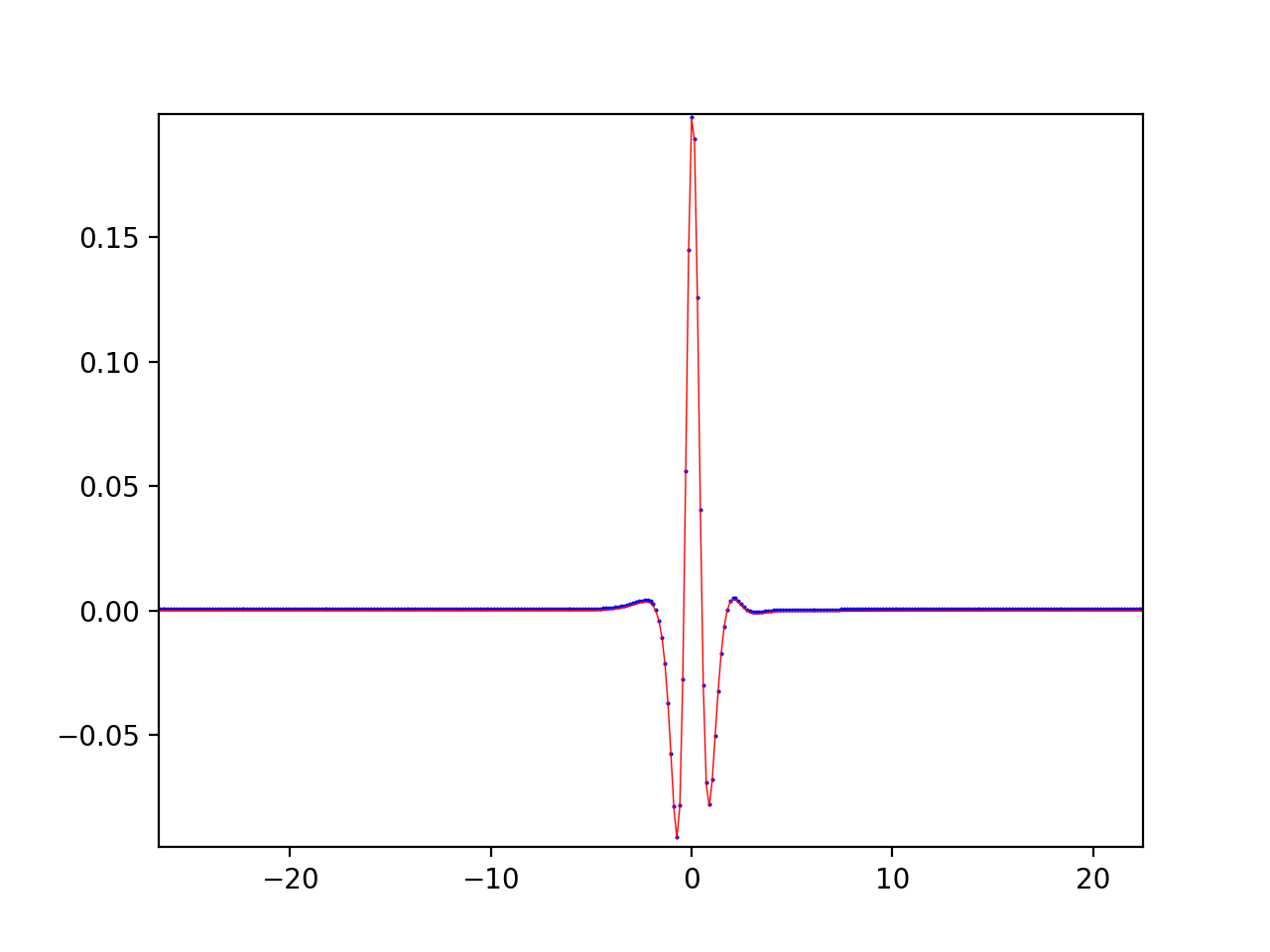}
\caption{The cross-sectional profile $u(x,0,t=2)$ of the exact solution \eqref{exact_sol}, in the linear regime ($\alpha = 0$), compared with the numerical solution. The modeling parameters are $\gamma = -1$, $a=1$. The numerical solution is shown as the pointed line, while the exact solution is displayed by the solid line.}
\label{lineargammanegative}
\end{figure}

In the next numerical simulation, we apply the solver given by \eqref{two_step_scheme} to the periodic traveling wave solution \eqref{periodic_wave_BBMKP}. The results are presented for different sets of parameters at time $t=2$.
For $\gamma = -1$, $a = 0$ (without damping), $r_1 = -0.687677$, $r_2=0.729637$, $h = -1$, $r_3=11.958$, $c = 1$, $r=2$, $\alpha=1$, the outcome is shown in Figure \ref{surf_solitarywave2}. For
$\gamma = 1$, $a=0$, $r_1 = -1.09808$, $r_2=1.5$, $r_3= 4.09808$, $h = -9/4$, $c = -1$, $r=1$, $\alpha=-2$, the result is shown in Figure \ref{surf_solitarywave3}. In these experiments, the numerical parameters are as follows: $\Delta t = 0.001$, $\Delta x = 4.9185/2^9 \approx 0.0096$, $\Delta y =2.4593/2^9 \approx 0.0048$, $\gamma = -1$; and
$\Delta x = \Delta y = 3.9847/2^9 \approx 0.0078$, for $\gamma = -1$. The maximum norm error between the exact traveling wave solution \eqref{periodic_wave_BBMKP} and the numerical solution is approximately $5e-5$ for $\gamma = -1$ and $7e-6$ for $\gamma =1$, indicating an excellent agreement between theoretical and numerical results between different values of parameters.

Finally, we use the solver \eqref{two_step_scheme}
to investigate the exponential decay of solutions to the BBM-KP equation \eqref{BBMKP2}, as established in previous sections.
Figures \ref{decay1gamma} and \ref{decay2gamma} show the logarithm of the $L^2$-norm $\|u\|_{L^2}$ as a function of time $t$, obtained for $\gamma = 1$ and $\gamma=-1$, respectively. The simulations are conducted with $\alpha = 1$ (nonlinearity), and $a = 1$ (damping), and the initial condition is a Gaussian pulse of the form \eqref{Gaussian_pulse} with $\sigma=4$.

Observe that the plots exhibit a linear trend, consistent with the exponential decay predicted by the theory discussed in Section 3. The numerical simulations use a time step of $\Delta t = 0.001$ and spatial discretizations of $\Delta x = \Delta y = 600/2^{12} \approx 0.1465$.

We repeat the previous numerical simulations with the same modeling parameters, but with $p=2$ in the BBM-KP equation \eqref{BBMKP2}. The results, presented in Figures \ref{decay3gamma}, \ref{decay4gamma}, reveal an exponential decay. However, this observation does not have a theoretical proof at present. This behavior has only been established theoretically in this paper for the case where $p=1$.

\begin{figure}[ht]
\centering
 \includegraphics[width=1.1\textwidth, height=0.8\textwidth ]{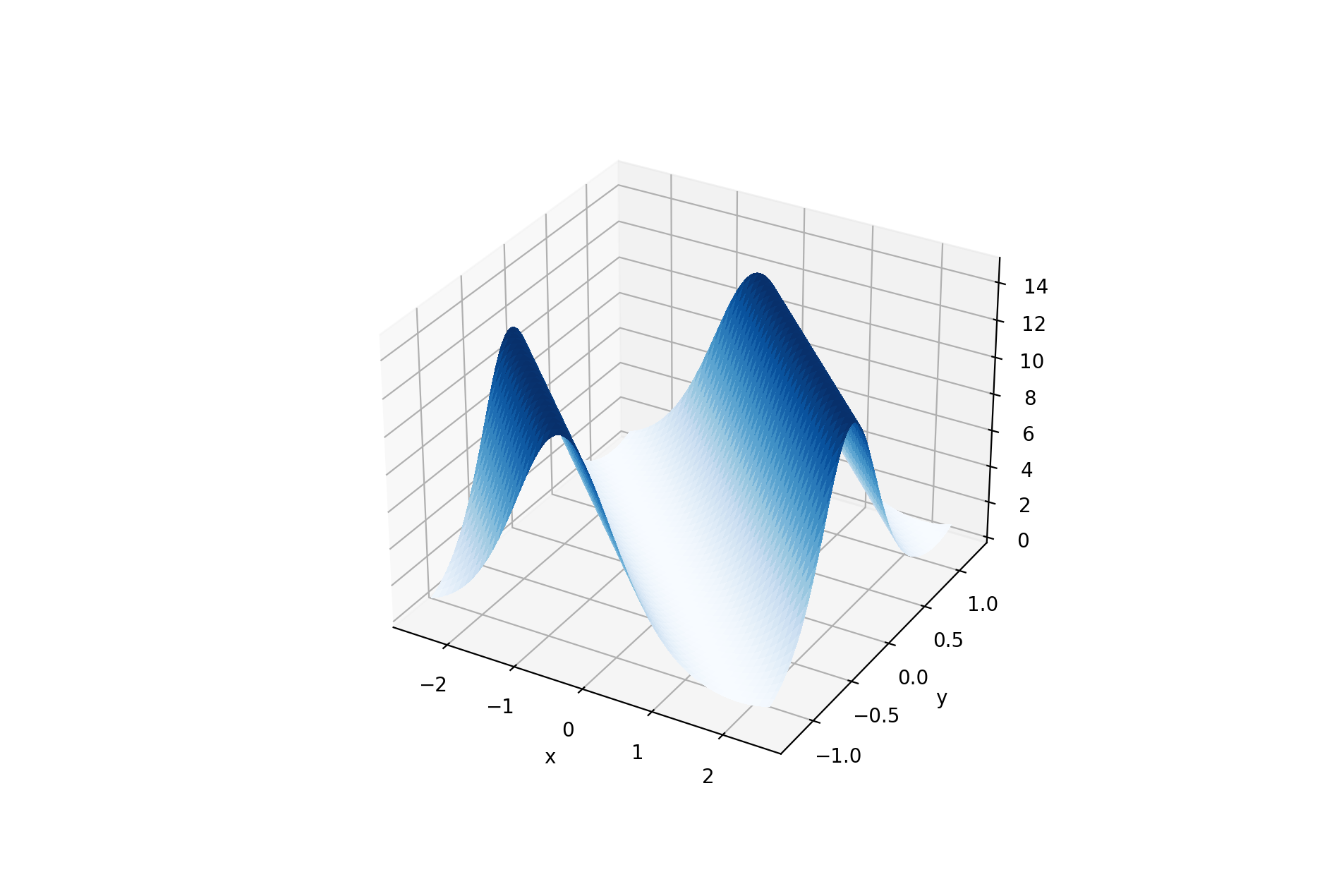}
\caption{Periodic travelling wave solution \eqref{periodic_wave_BBMKP} for $\gamma = 1$, at $t = 2$.}
\label{surf_solitarywave2}
\end{figure}

\begin{figure}[ht]
\centering
 \includegraphics[width=1.1\textwidth, height=0.8\textwidth ]{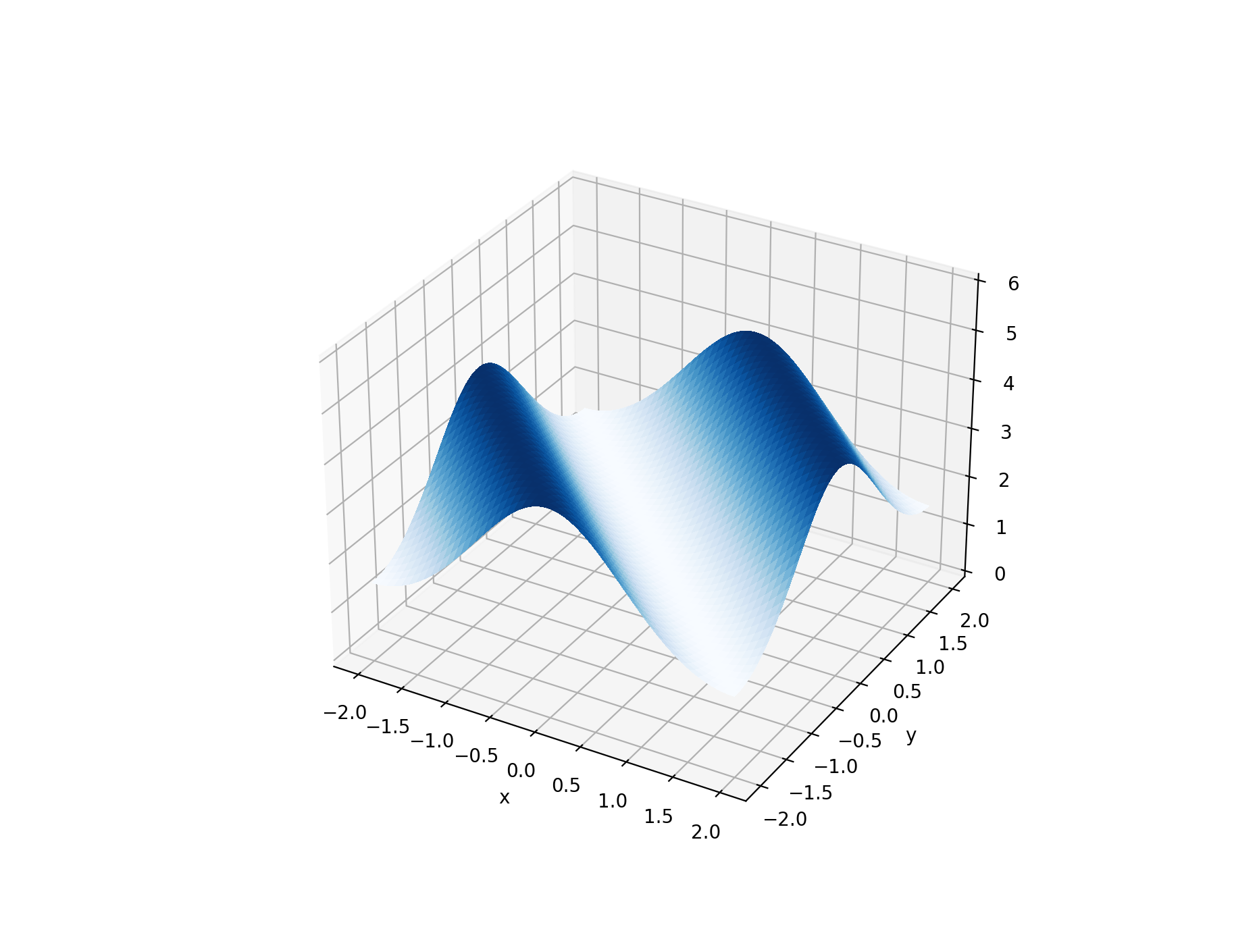}
\caption{Periodic travelling wave solution \eqref{periodic_wave_BBMKP} for $\gamma=-1$, at $t = 2$.}
\label{surf_solitarywave3}
\end{figure}

\begin{figure}[ht]
\centering
 \includegraphics[width=0.8\textwidth, height=0.4\textwidth ]{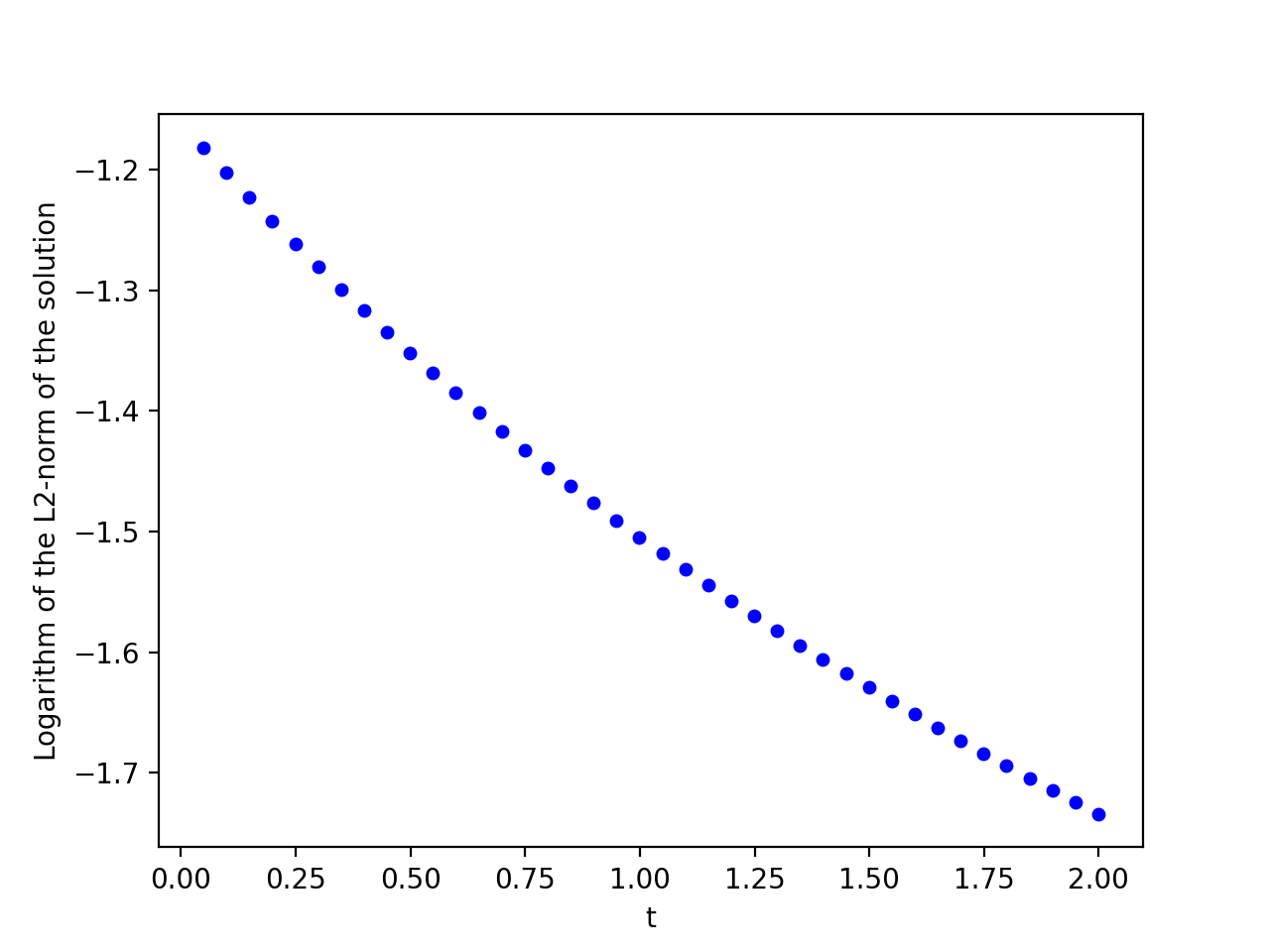}
\caption{Logarithm of the $L^2$-norm $\|u\|_{L^2}$ of a solution $u$ of equation \eqref{BBMKP2}, as a function of time $t$ for $\gamma=1, p=1$. }
\label{decay1gamma}
\end{figure}

\begin{figure}[ht]
\centering
 \includegraphics[width=0.8\textwidth, height=0.4\textwidth ]{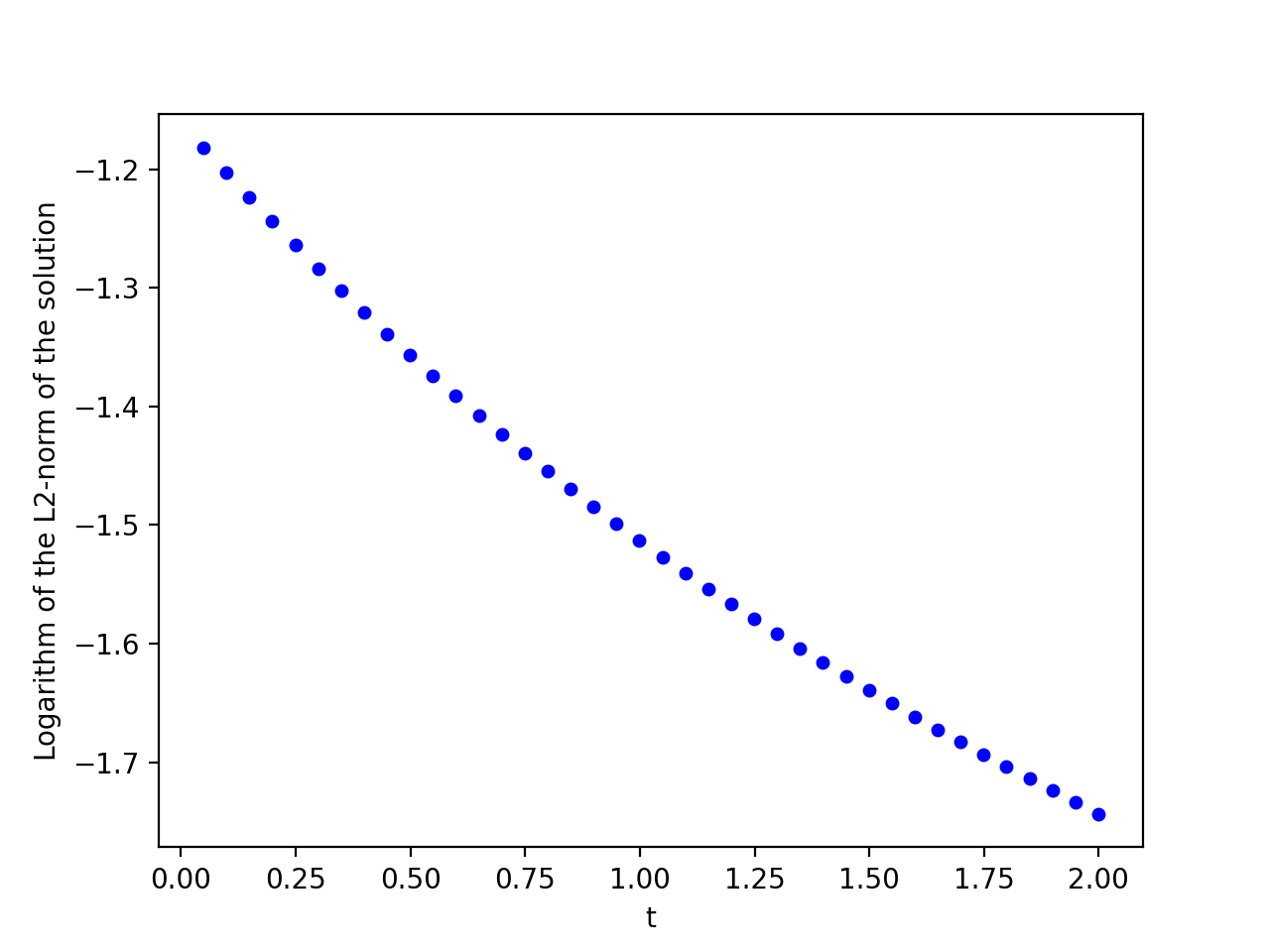}
\caption{Logarithm of the $L^2$-norm $\|u\|_{L^2}$ of a solution $u$ of equation \eqref{BBMKP2}, as a function of time $t$ for $\gamma = -1, p=1$. }
\label{decay2gamma}
\end{figure}

\begin{figure}[ht]
\centering
 \includegraphics[width=0.8\textwidth, height=0.4\textwidth ]{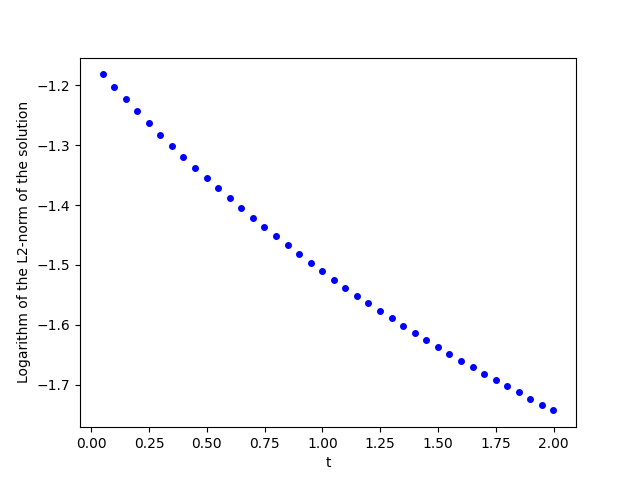}
\caption{Logarithm of the $L^2$-norm $\|u\|_{L^2}$ of a solution $u$ of equation \eqref{BBMKP2}, as a function of time $t$ for $\gamma=1, p=2$. }
\label{decay3gamma}
\end{figure}

\begin{figure}[ht]
\centering
 \includegraphics[width=0.8\textwidth, height=0.4\textwidth ]{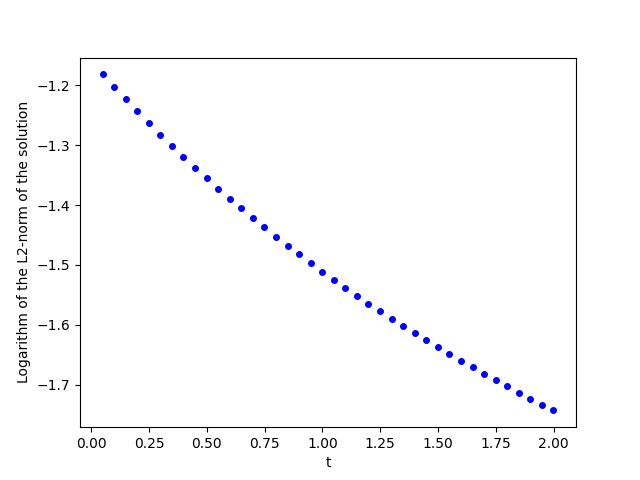}
\caption{Logarithm of the $L^2$-norm $\|u\|_{L^2}$ of a solution $u$ of equation \eqref{BBMKP2}, as a function of time $t$ for $\gamma = -1, p=2$. }
\label{decay4gamma}
\end{figure}

\section{Further Comments}\label{furthercomments}

This work addresses stabilizing the BBM-KP equation in an unbounded domain, incorporating a damping term localized in both the \(x\) and \(y\) directions. We establish that the energy associated with this equation decays exponentially to zero over time. The core of this analysis lies in demonstrating a unique continuation property, which comes from energy dissipation methods and an observability inequality derived from the compactness-uniqueness approach.

One of the significant results of this work is that the exponential decay rate is uniform and does not depend on the size of the initial data. This implies that the system exhibits global uniform exponential stabilization, ensuring robust long-term stability under a wide range of initial conditions. However, it is important to note that our results hold under certain regularity assumptions. Specifically, we restrict the initial data, requiring it to belong to a Sobolev space with regularity \(s > \frac{3}{2}\),  this condition stems from the global well-posedness. In summary, while we achieve strong stabilization results, the dependence on the regularity of the initial data presents a notable limitation, which could be an interesting direction for future research. Further investigations may focus on relaxing these regularity constraints or extending the analysis to other variations of the BBM-KP equation.

We also conducted numerical simulations using a spectral solver in conjunction with a second-order time integration scheme for the BBM-KP equation. The precision of the proposed numerical method was evaluated by comparing its results with some exact solutions of the BBM-KP equation. Additionally, we numerically validated the exponential time decay of these solutions caused by the damping term, testing various values of the model parameters.

\vglue0.4cm
\noindent Finally, let us pose some natural questions or open problems:\\

\begin{enumerate}
    \item[$(\mathcal{A})$] {\em Less regular initial data:} A significant natural open problem lies in relaxing the regularity requirements for the initial data. In particular, while \cite{saut2004} demonstrates the global well-posedness of the BBM-KP equation in the energy space, obtaining the corresponding observability inequality remains a challenge due to the lack of compactness propagation in this setting. Further investigations may focus on relaxing these regularity constraints or extending the analysis to other variations of the BBM-KP equation. \\
    
    \item[$(\mathcal{B})$] {\em Another nonlinear term $u^pu_x$ with $p=2$:} The principal issue for $p = 2$ lies in the unique continuation property (UCP). \textcolor{black}{Using} the method presented in Section \ref{UCP}, it becomes clear that the conclusion is based on the fact that $p+1$ is an even number. This observation leads to the possibility of extending the same UCP to the generalized BBM-KP equation when $p$ is an odd integer. In such cases, the UCP holds because of similar structural properties of the equation. However, for odd values of $p$, a major challenge arises in terms of well-posedness, as noted in Remark \ref{remark1}. In contrast, when $p$ is even, we achieve global well-posedness, as demonstrated in the case of $p = 2$. Our numerical experiments suggest that exponential decay is also observed for the nonlinear exponent $p=2$, a case not covered in the current paper. This remains as an open problem to be addressed in future research.\\ 
    
    \item[$(\mathcal{C})$] {\em Damping Source localized in one direction:} The unique continuation property (UCP) for the BBM-KP equation is achieved when the solution is supported in a square region like \([ -B, B ] \times [ C, D ]\). In such a setting, the stabilization problem can be effectively reduced to proving the UCP, provided the damping term is localized within the same square region. However, a more complex situation arises when the damping term is localized in different geometric configurations. For example, if the damping is supported in a strip such as \(\mathbb{R} \times [ C, D ]\) or \([ -B, B ] \times \mathbb{R}\), the UCP must be established for solutions \(u\) that are supported within these unbounded strip-like regions. The challenge here stems from the difficulty in controlling the propagation of information or energy dissipation in unbounded domains, which complicates the application of classical UCP techniques. This presents a natural and significant open problem in our context. Achieving the UCP in these geometries is crucial for extending the stabilization results to more general damping configurations. 

 \end{enumerate}

\subsection*{Acknowledgment} 
This work began while the first author was visiting the Universidade Federal de Pernambuco and was completed when the second author was visiting the Universidad Nacional de Colombia Sede Manizales.  Gonzalez Martinez was supported by CAPES/COFECUB grant 88887.879175/2023-00, CNPq grant 421573/2023-6 and Propesqi. The third author was partially supported by the Universidad del Valle, under research project C.I. 71360. The authors thank the host institutions for their warm hospitality.

\end{document}